\newif\if@restonecol
\theoremstyle{plain}
\newtheorem{theorem}{Theorem}
\newtheorem{assumption}[theorem]{Assumption}
\newtheorem{lemma}[theorem]{Lemma}
\newtheorem{definition}[theorem]{Definition}
\newtheorem{remark}[theorem]{Remark} 
\newcommand{\bR}{\ensuremath{{R}}}
\newcommand{\matid}{\ensuremath{{{I}}}}
\newcommand{\bP}{\ensuremath{{P}}}
\newcommand{\bB}{\ensuremath{{B}}}
\newcommand{\bD}{\ensuremath{{D}}}
\newcommand{\bM}{\ensuremath{{M}}}
\newcommand{\bW}{\ensuremath{{W}}}
\newcommand{\bZ}{\ensuremath{{Z}}}
\newcommand{\bPi}{\ensuremath{\bP_0}}
\newcommand{\bphi}{\ensuremath{\boldsymbol{\phi}}}
\newcommand{\bANeu}{\bA_{|\Omega\s}} 
\newcommand{\bL}{\ensuremath{{L}}}
\newcommand{\KPs}{\ensuremath{\tilde \Pi\s}} 
\newcommand{\bA}{\ensuremath{{A}}}
\newcommand{\bH}{\ensuremath{{H}}}
\newcommand{\Nprime}{\ensuremath{{\hat N_c}}}
\newcommand{\Ncol}{\ensuremath{{N^c}}}
\newcommand{\bu}{\ensuremath{\mathbf{u}}}
\newcommand{\by}{\ensuremath{\mathbf{y}}}
\newcommand{\bv}{\ensuremath{\mathbf{v}}}
\newcommand{\bx}{\ensuremath{\mathbf{x}}}
\newcommand{\bb}{\ensuremath{\mathbf{b}}}
\newcommand{\bz}{\ensuremath{\mathbf{z}}}
\newcommand{\st}{\ensuremath{_{t}}}
\newcommand{\0}{\ensuremath{_{0}}}
\newcommand{\sups}{\ensuremath{^{s}}}
\newcommand{\s}{\ensuremath{_{s}}}
\newcommand{\range}{\ensuremath{\operatorname{range}}}
\newcommand{\unN}{\ensuremath{\llbracket 1, N \rrbracket}}
\begin{document}

\title{GenEO spectral coarse spaces in SPD domain decomposition}

\author{%
{\sc
Nicole Spillane\thanks{Email: nicole.spillane@.polytechnique.edu \\This research received no external funding.}} \\[2pt]
{\small CNRS, CMAP, \'Ecole polytechnique, 91128 Palaiseau Cedex, France} 
}

\maketitle

\pagestyle{myheadings}
\markboth{Nicole SPILLANE}{GenEO spectral coarse spaces (Nicole SPILLANE)}

\thispagestyle{plain}
\begin{abstract}
{Two-level domain decomposition methods are preconditioned Krylov solvers. What separates one- and two-level domain decomposition methods is the presence of a coarse space in the latter. The abstract Schwarz framework is a formalism that allows to define and study a large variety of two-level methods. The objective of this article is to define, in the abstract Schwarz framework, a family of coarse spaces called the GenEO coarse spaces (for Generalized Eigenvalues in the Overlaps). 
\textcolor{black}{In detail, this work is a generalization of several methods, each of which exists for a particular choice of domain decomposition method. The article both unifies the GenEO theory and extends it to new settings. The proofs are based on an abstract Schwarz theory which now applies to coarse space corrections by projection, and has been extended to consider singular local solves.} 
 Bounds for the condition numbers of the preconditioned operators are proved that are independent of the parameters in the problem (\textit{e.g.}, any coefficients in an underlying PDE or the number of subdomains). The coarse spaces are computed by finding low- or high-frequency spaces of some well-chosen generalized eigenvalue problems in each subdomain. The abstract framework is illustrated by defining two-level Additive Schwarz, Neumann-Neumann and Inexact Schwarz preconditioners for a two-dimensional linear elasticity problem. Explicit theoretical bounds as well as numerical results are provided for this example.}
\end{abstract} 

\paragraph{Keywords:}
{linear solver, domain decomposition, coarse space, preconditioning, deflation, linear elasticity, inexact Schwarz, spectral bounds.} 

\section{Introduction}

The problem considered is the solution of linear systems of the form
\begin{equation*}
\bA \bx = \mathbf{b}, \text{ where $\bA \in \mathbb R^{n\times n} $ is symmetric positive and definite (spd).} 
\end{equation*}

The applications to bear in mind are ones for which $\bA$ is sparse and the number $n$ of unknowns is very large. Hence, parallel solvers, and more specifically domain decomposition solvers, are studied. The purpose of the article is to provide unified definitions and theory for two-level domain decomposition methods with spectral coarse spaces. This is done in the \textit{abstract Schwarz framework} by which it is referred to the formalism presented in Chapters 2 and 3 of the book by Toselli and Widlund \cite{ToselliWidlund_book2005}. This framework provides both a way of defining two-level domain decomposition preconditioners and to prove condition number bounds that involve them. 

Having chosen a partition of the global computational domain into subdomains, one-level domain decomposition preconditioners are sums of inverses of some well-chosen local problems in each of the subdomains. Two-level methods have an extra ingredient that is the coarse space. Choosing the coarse space comes down to choosing an extra, low rank, problem that is shared between all subdomains and solved at every iteration of the Krylov subspace solver. A good choice of coarse space can have a huge, positive, effect on the convergence of the method. It is with the introduction of coarse spaces that domain decomposition methods became scalable. Indeed, the first coarse spaces already ensured that, for some problems, the condition number of the two-level preconditioned operators did not depend on the number of subdomains and only weakly on the number of elements in each subdomain (see \textit{e.g.}, \cite{FarhatRoux_IJNME91,klawonn2001feti}).

\textcolor{black}{
A consensus seems to have occurred that it is worth enlarging, even quite significantly, the coarse space if this enlargement allows to achieve robustness and scalability. One popular way of doing this is to compute the coarse space by solving generalized eigenvalue problems in the subdomains. These generalized eigenvalue problems are chosen to seek out the vectors that make convergence slow. A first group of methods was tailored to the scalar elliptic problem with a varying conductivity in the Additive Schwarz framework. Among these are the two articles \cite{2010GalvisJ_EfendievY-a0,2010GalvisJ_EfendievY-ab} on one hand, and \cite{Nataf:TDD:2010,Nataf:CSC:2011,Dolean:ATL:2011} on the other. 
 The same two groups of authors contributed, with collaborators, to the set of articles  \cite{efendiev2012robust} and \cite{2011SpillaneCR,spillane2013abstract}. This time the methods apply to a much wider range of PDEs that include the linear elasticity equations. The method in \cite{2011SpillaneCR,spillane2013abstract} is called GenEO for Generalized eigenvalues in the overlaps. 
A different version of the GenEO coarse space was proposed for FETI and BDD in \cite{SPILLANE:2013:FETI_GenEO_IJNME}. The problems there are reduced to the interfaces between subdomains but the name GenEO was kept since these interfaces, in some sense, constitute an overlap between subdomains. The family of GenEO coarse spaces has grown since with \textit{e.g.}, the contributions \cite{haferssas2017additive,MR3450068} for Optimized Schwarz and \cite{marchand2020two} in the context of boundary element methods. In this article, the coarse spaces are referred to as GenEO coarse spaces since their construction generalizes the procedure for the two original GenEO coarse spaces: \cite{spillane2013abstract,SPILLANE:2013:FETI_GenEO_IJNME}.} \textcolor{black}{To date, the most general framework for spectral coarse spaces is the article \cite{agullo2019robust}.}

The idea of solving generalized eigenvalue problems to design coarse spaces with guaranteed good convergence had in fact already been proposed, unknowingly to the authors previously mentioned. Indeed, the pioneering work \cite{mandel2007adaptive} proposes such a technique for FETI-DP and BDDC. The authors make use of a `Local Indicator of the Condition Number Bound' to fill a gap in what would be an otherwise complete proof of a condition number bound. The follow-up article \cite{mandel2013adaptive} illustrates the efficiency of the method for BDDC in a multilevel framework, and \cite{klawonn2016comparison} (by different authors) makes the proof complete in two dimensions. It must also be noted that, as early as 1999, the authors of \cite{1999BrezinaM_HebertonC_MandelJ_VanekP-aa} proposed a multigrid smoothed aggregation algorithm with an enrichment technique that includes low-frequency eigenmodes of the operator in the aggregate (which is like a subdomain). Thanks to this procedure, any convergence rate chosen $\textit{a priori}$ can be achieved. Spectral enrichment is also at the heart of the spectral algebraic multigrid method \cite{2003ChartierT_FalgoutR_HensonV_JonesJ_ManteuffelT_McCormickS_RugeJ_VassilevskiP-aa}. The field of coarse spaces based on generalized eigenproblems in subdomains has been so active that it is not realistic to list all contributions here. A very incomplete overview is \cite{gander2017shem,heinlein2019adaptive,pechstein2017unified,klawonn2015toward,zampini2016pcbddc}. A topic that is currently very  active is the development of fully algebraic methods \cite{zbMATH07846109,zbMATH07695780}. 

\textbf{Contributions of the article and outline} 
 \textcolor{black}{There are three main contributions in this article. The first is to define GenEO coarse spaces for preconditioners in the abstract Schwarz framework and provide spectral bounds for the resulting preconditioners. The complete set of assumptions is clearly stated and includes singular local solves. All three coarse space corrections: projected, additive and  hybrid are considered. In order to derive the eigenvalue bounds, a significant generalization is made to the abstract framework from \cite{ToselliWidlund_book2005} both by allowing the local solvers to be singular, and by restricting the necessary conditions to projected subspaces (see in particular Lemmas~\ref{lem:upper} and~\ref{lem:stabsplit}). This is the second contribution. Finally, examples are provided of how to apply this theory to various domain decomposition methods (with analysis and numerical results). One of these methods is Inexact Schwarz with incomplete Cholesky factorization. This has not yet been consider in the GenEO literature. This is the third contribution. Since the spectral results can appear to be quite technical, they are summarized in Table~\ref{tab:absGenEO}.  The results for particular domain decomposition methods are summarized in Table~\ref{tab:abstract-app}.}

 \textcolor{black}{The outline of the article is the following. In Section~\ref{sec:abSchwarz}, the Abstract Schwarz framework is presented with minimal assumptions and the main results are stated. In Section~\ref{sec:GenEO} the results are proved. This includes generalized versions of the technical lemmas from~\cite{ToselliWidlund_book2005}.  As an illustration, Section~\ref{sec:examples} considers a two-dimensional linear elasticity problem and presents exactly how the abstract framework applies to the Additive Schwarz, Neumann-Neumann, and inexact Schwarz preconditioners.} 

\paragraph{Notation}

\begin{itemize}
\item The abbreviations spd and spsd are used to mean symmetric positive definite and symmetric positive semi-definite. 
\item $\matid$ is the identity matrix of the conforming order that is always clear in the context;
\item 
$ \langle \bx,  \by  \rangle = \bx^\top \by, \text{ and } \|\bx\| = \langle \bx,  \bx  \rangle^{1/2}, \text{ for any } \bx,\by\in\mathbb R^m$;
\item if $\bM$ is an order $m$ spd matrix, for any  $\bx,\by\in\mathbb R^m$,  
\[
\langle \bx,  \by  \rangle\textcolor{black}{_{\bM}} = \langle \bx, \bM \by\rangle, \, \|\bx\|_\bM = \langle \bx,  \bM \bx  \rangle^{1/2}, \text{ and }\bx \perp^{\bM} \by \text{ if } \langle \bx,  \bM \by  \rangle = 0;
\]
\item if it is useful to stress that the standard Euclidean inner product is considered, the notation above is used with $\ell_2$ instead of $\bM$;
\item if $\bM$ is an order $n$ spsd matrix 
 $|\bx|_\bM = \langle \bx,  \bM \by  \rangle^{1/2}\text{ for any } \bx \in\mathbb R^m$;
\item if $\bM$ is a matrix, $\lambda(\bM)$ is any one of its eigenvalues; 
\item if $\bM$ is a matrix, $\bM^\dagger$ denotes its pseudo-inverse (also called its Moore-Penrose inverse) as defined \textit{e.g}, in \cite[Problem 7.3.P7]{hornjoh:85}.
\end{itemize}

\begin{table}
\caption{\textcolor{black}{Summary of the results from Theorems~\ref{th:main} and~\ref{th:mainadd}. (Not included are the variants (labelled \textit{(b)} and \textit{(c)}) of \eqref{eq:th2-hard}). Recall that for any matrix $\bB$, the notation $\lambda(\bB)$ refers to any of its eigenvalues. For the projected operator $\bH \bA \bPi$, the lower bounds are to be understood for any non-zero eigenvalue.}}
\label{tab:absGenEO}
\centering
All the result below are under Assumptions~\ref{ass:RsVs-Astilde-Hspd} and \ref{ass:V0}. 
\scalebox{0.8}{
  \SetTblrInner{rowsep=5pt}
\begin{tblr}{hline{1-3,Z} = {2pt}, 
  hline{4-Y} = {1pt},
  vline{1-Z} = {1-2}{solid},
  vline{1-Z} = {1pt},
  colspec={ccccc}}
  \SetCell[r=2]{c} Local contributions to $V\0$ & \SetCell[c=2]{c} Requirement &
  & \SetCell[r=2]{c} Spectral bounds & \SetCell[r=2]{c} Ref. \\
 & \makecell{Ass.~\ref{ass:stable-split-matrix} \\(or \ref{ass:stable-split-matrix-withDs})} & $\tilde \bA \s$ spd &  & Ref. \\
$\operatorname{Ker}(\tilde \bA \s)$ &   &  & $\begin{aligned}  \lambda(\bH \bA \bPi) &\leq  \frac{\Ncol}{C_\sharp} \\ \lambda(\bH_{hyb} \bA )  &\leq  \max\left(1,\frac{\Ncol}{C_\sharp}\right)   \end{aligned}$ & \eqref{eq:th1-nat} \\
$\emptyset$ &   & $\checkmark$  & $\displaystyle \lambda(\bH_{ad} \bA ) \leq \frac{\Ncol}{C_\sharp} + 1 $& \eqref{eq:th2-up} \\
$\mathcal Y_L(\tau, \tilde \bA \s, \bR\s \bA  \bR\s^\top)$ &  &    & $\begin{aligned}  \lambda(\bH \bA \bPi) &\leq  \frac{\Ncol}{\tau} \\ \lambda(\bH_{hyb} \bA )  &\leq  \max\left(1,\frac{\Ncol}{\tau}\right)   \end{aligned}$   & \eqref{eq:th1-lambdamax} \\ 
$\mathcal Y_L(\tau^{-1}, \bM \s,  \tilde \bA \s )$ & $\checkmark$ &  $\checkmark$ & $\begin{aligned}   \lambda(\bH \bA \bPi) &\geq  \frac{1}{\tau \Nprime}  \\ \lambda(\bH_{hyb} \bA )  &\geq    \min \left(1,  \frac{1}{\tau \Nprime} \right)  \\ \lambda(\bH_{ad} \bA ) &\geq   \left[\max\left(2, 1 + 2 \frac{\Ncol}{C_\sharp}\right)\max(1, \Nprime \tau)  \right]^{-1}\end{aligned}$ &  \makecell{\eqref{eq:th1-lambdamin} \\ and \\  \eqref{eq:th2-nat}} \\ 
$\operatorname{Ker}(\tilde \bA \s)+\mathcal Y_H(\tau, \tilde \bA \s , \bM \s)$ & \makecell{$\checkmark$ and \\ $\bM\s$ spd} &   & $\begin{aligned}  \lambda(\bH \bA \bPi)  &\geq   \frac{1}{\tau \Nprime}  \\    \lambda(\bH_{hyb} \bA ) &\geq \min \left(1,  \frac{1}{\tau \Nprime} \right)  \end{aligned}$ & \eqref{eq:th1-lambdamin} \\ 
\makecell{$\operatorname{Ker}(\tilde \bA \s)+  \operatorname{Ker}(\bM\s) +  $ \\$\bW\s \mathcal Y_H(\tau,  \bW\s^\top \tilde \bA \s \bW\s, \bW\s^\top \bM \s  \bW\s) $}
   & $\checkmark$ &   & \makecell{Same two bounds as in the \\ cell directly above} & \eqref{eq:th1-lambdamin} \\ 
$\mathcal Y_L(\upsilon, \tilde \bA \s, \bR\s \bA  \bR\s^\top) + \mathcal Y_L(\tau^{-1}, \bM \s,  \tilde \bA \s )$  & $\checkmark$ & $\checkmark$   & $\displaystyle   \lambda(\bH_{ad} \bA ) \geq  \left[\max\left(2, 1 + 2 \frac{\Ncol}{\upsilon}\right)\max(1, \Nprime \tau)  \right]^{-1}$ & \eqref{eq:th2-hard} \\
\end{tblr}
}

\bigskip
{\footnotesize
Summary of Notation:
\begin{itemize}
\item $\bA$: problem matrix in the linear system $\bA \bx = \bb$,
\item $V\0$: coarse space,
\item $\bPi$: coarse projector (Definition~\ref{def:precs}), 
\item $\tilde \bA \s$: local solvers in the definitions of the one-level preconditioner $\bH = \sum_{s=1}^N \bR\s^\top \tilde \bA\s^\dagger \bR\s$ (Definition~\ref{def:precs}),
\item $\bH_{hyb} = \bPi \bH \bPi^\top + \bR\0^\top (\bR\0 \bA \bR\0^\top)^{-1} \bR\0 $: two-level hybrid preconditioner (Definition~\ref{def:precs}),
\item $\bH_{ad} =  \bH + \bR\0^\top (\bR\0 \bA \bR\0^\top)^{-1} \bR\0$: two-level additive preconditioner (Definition~\ref{def:precs}),
\item $\Ncol$: coloring constant (Definition~\ref{def:color}),
\item $C_\sharp$: constant that depends on $\tilde \bA \s$ (Definition~\ref{def:Csharp}),
\item $ \bR\s \bA  \bR\s^\top$: restriction of the problem to subdomain number $s$ where $\bR\s$ is the restriction to the subdomain (see Assumption~\ref{ass:RsVs-Astilde-Hspd}), 
\item $\tau, \,\upsilon >0$: thresholds chosen by the user,
\item $\mathcal Y_L(\tau, \bM_\bA, \bM_\bB) = \operatorname{span}\{\by; \, \bM_\bA \by = \lambda \bM_\bB \by \text{ with } \lambda < \tau \} $: span of low-frequency eigenvectors  (Definition~\ref{def:YLYH}),
\item $\mathcal Y_H(\tau, \bM_\bA, \bM_\bB) = \operatorname{span}\{\by; \, \bM_\bA \by = \lambda \bM_\bB \by \text{ with } \lambda \geq \tau \}$: span of high-frequency eigenvectors (Definition~\ref{def:YLYH}),
\item $\Nprime$ and $\bM\s$: from Assumption~\ref{ass:stable-split-matrix} (or \ref{ass:stable-split-matrix-withDs}),
\item $\bW\s \in \mathbb R^{n\s \times \operatorname{rank}( \bM\s)}$: matrix whose columns form an $I$-orthonormal basis of $\range(\bM\s)$.
\end{itemize}
}
\end{table}

\section{Assumptions, GenEO coarse spaces and spectral results}
\label{sec:abSchwarz}

The problem is, for a given $\mathbf{b} \in \mathbb R^{n}$, to find $\bx$ such that
\begin{equation}
\bA \bx = \mathbf{b}, \text{ where $\bA \in \mathbb R^{n\times n} $ is spd} 
\label{eq:Ax=b}
\end{equation}

\subsection{Abstract Schwarz setting and preconditioners}
\label{sub:abslocal}

We start with the components of the one-level abstract Schwarz preconditioner.  

\begin{assumption}[Local Setting]
\label{ass:RsVs-Astilde-Hspd}
Let $n \in \mathbb N$ denote the dimension of the problem matrix $\bA$. Let $N \in \mathbb N$ denote the chosen number of subdomains and $n\s \in \mathbb N$ ($s \in \unN$) denote the cardinality of each one. It is assumed that restriction operators $\bR\s \in \mathbb R^{n\times n\s}$ and local operators $\tilde \bA\s \in  \mathbb R^{n\s \times n\s} $ have been defined for each $s \in \unN$ in such a way that: 
\begin{itemize}
\item $ \bR\s \bR\s^\top = \matid$ (\textit{i.e.,} the rows of $\bR\s$ form an orthonormal basis of $ \range \left(\bR\s^\top \right)$), 
\item $\mathbb R^n = \sum_{s=1}^N \range(\bR\s^\top)$ (\textit{i.e.,}  the set of local subspaces forms a cover of the global space), 
\item each $\tilde \bA\s$ is an spsd matrix,
\item with $\tilde \bA\s^\dagger$ denoting the pseudo-inverses of the matrices $\tilde \bA \s$, the one-level preconditioner \\  $\left(\sum_{s=1}^N \bR\s^\top \tilde \bA\s^\dagger \bR\s\right)$  (later denoted by $\bH$ -- see Definition~\ref{def:precs}) is non-singular. Hence it is symmetric positive definite.  
\end{itemize}
\end{assumption}

The coloring constant, whose definition is recalled next following \cite{ToselliWidlund_book2005}[Section 2.5.1], plays an important role in the numerical performance and the theory of domain decomposition methods. For a given $\bA$, it depends only on the choice of local subspaces. 

\begin{definition}[Coloring constant]
\label{def:color}
Let $\Ncol \in \mathbb N$ be such that there exists a set $\{ \mathcal C_j; \,  1 \leq j \leq \Ncol\}$ of subsets of $\llbracket 1, N \rrbracket$ satisfying
\[
\llbracket 1,N\rrbracket = \bigcup_{1\leq j \leq \Ncol} \mathcal C_j \text{ and } \forall j \in \llbracket 1, \Ncol \rrbracket,  \, \forall \, s,t \in \mathcal C_j, \,   s\not = t \Rightarrow  \bR\s \bA \bR\st^\top = 0. 
\]
\end{definition}

One can always choose $\Ncol = N$ but in general there are values of $\Ncol$ that are significantly smaller than the number $N$ of subdomains. The number $\Ncol$ is often referred to as the coloring constant since in can be viewed as the number of colors needed to color each subdomain in such a way that any two subdomains with the same color are $\bA$-orthogonal. 

\begin{assumption}[Splitting of $\bA$]
\label{ass:stable-split-matrix}
Assume that there exist a family of $N$ spsd matrices $\bM\s \in \mathbb R^{n\s \times n \s}$ (for $s=1,\dots,N$), and a real number $\Nprime > 0$  such that any $\bx \in \mathbb R^n$ can be decomposed as
\begin{equation}
\label{eq:stable-split-matrix}
\bx = \sum_{s=1}^N \bR\s^\top \by\s,\, \by\s \in \mathbb R^{n\s} , \text{ and } \sum_{s=1}^N \langle \by\s , \bM\s \by\s \rangle \leq \Nprime  \langle \bx, \bA \bx \rangle, \text{ for every } \bx \in \mathbb R^n.
\end{equation}
\end{assumption}
The usual way of defining the GenEO coarse spaces is slightly less general and involves a family of matrices that form a partition of unity. Here, the partition of unity matrices are not assumed to be diagonal. 

\begin{assumption}[Simplification of Assumption~\ref{ass:stable-split-matrix}]
\label{ass:stable-split-matrix-withDs}
Let $\bD\s \in \mathbb R^{n\s \times n\s}$ for $s= 1, \dots, N$ be a family of spd matrices that form a partition of unity in the sense that  $\matid = \sum_{s=1}^N \bR \s^\top \bD\s \bR \s$.
Assume that there exist a set of N spsd matrices $\bM\s \in \mathbb R^{n\s \times n \s}$ (for $s=1,\dots,N$), and a real number $\Nprime > 0$  such that
\begin{equation}
\label{eq:stable-split-matrix-withDs}
\sum_{s=1}^N \langle \bD\s \bR\s \bx ,\bM\s \bD\s \bR\s \bx \rangle \leq \Nprime  \langle \bx, \bA \bx \rangle, \text{ for every } \bx \in \mathbb R^n.
\end{equation}
Assumption~\ref{ass:stable-split-matrix-withDs} implies Assumption~\ref{ass:stable-split-matrix} by choosing the splitting $\bx = \sum_{s=1}^N \bR\s^\top \by\s$ with $\by\s = \bD\s\bR\s \bx$.
\end{assumption}

To inject a second level into the preconditioners, a coarse space and a coarse solver must be chosen. The coarse space is the central topic of this article. It will be denoted by $V\0$ and the following assumption is made. 
\begin{assumption}[Coarse Setting]
\label{ass:V0}
Given a coarse space $V\0 \subset \mathbb R^n$, a basis for $V\0$ is stored in the rows of a matrix denoted ${\bR\0}$:
\[
V\0 = \range(\bR\0^\top); \quad {\bR\0} \in \mathbb R^{\dim(V\0) \times n}. 
\]
\end{assumption}
A solver must be chosen for the coarse space. In this article we will focus on the case where the coarse solver is the exact solver on the coarse space: $(\bR\0 \bA \bR\0^\top)^{-1}$.

\begin{definition}[Abstract Schwarz preconditioners]
\label{def:precs}
Under Assumption~\ref{ass:RsVs-Astilde-Hspd}, the one-level abstract Schwarz preconditioner for linear system~\eqref{eq:Ax=b} is defined by:
\begin{equation}
\label{eq:bH}
\bH := \sum_{s=1}^N \bR\s^\top \tilde \bA\s^\dagger \bR\s.
\end{equation}

Under Assumptions~\ref{ass:RsVs-Astilde-Hspd} and~\ref{ass:V0}, the two-level hybrid Schwarz preconditioner is defined by 
\[
\bH_{hyb} := \bPi \bH \bPi^\top + \bR\0^\top (\bR\0 \bA \bR\0^\top)^{-1} \bR\0,
\]
where 
\[
\bPi := \matid - \bR\0^\top (\bR\0 \bA \bR\0^\top)^{-1} \bR\0 \bA
\]
is the $\bA$-orthogonal projection satisfying $\operatorname{Ker}(\bPi) = V\0$. 

Under Assumptions~\ref{ass:RsVs-Astilde-Hspd} and~\ref{ass:V0}, the two-level Additive Schwarz preconditioner is defined by 
\[
\bH_{ad} := \bH + \bR\0^\top (\bR\0 \bA \bR\0^\top)^{-1} \bR\0.
\]
\end{definition}

\subsection{Notation}

\begin{definition}[$\mathcal Y_L(\tau, \bM_\bA, \bM_\bB)$ and $\mathcal Y_H(\tau, \bM_\bA, \bM_\bB)$]
\label{def:YLYH}
Let $m \in \mathbb N^*$, let $\bM_\bA \in \mathbb R^{m\times m}$ be an spsd matrix, let $\bM_\bB \in \mathbb R^{m\times m}$ be an spd matrix, and let  $\tau >0$ be a scalar. 
We define $\mathcal Y_L(\tau, \bM_\bA, \bM_\bB)$ and $\mathcal Y_H(\tau, \bM_\bA, \bM_\bB)$ to be the spaces of, respectively, low and high-frequency eigenvectors of the generalized eigenvalue problem $ \bM_\bA \by = \lambda \bM_\bB \by$:
\[
\mathcal Y_L(\tau, \bM_\bA, \bM_\bB) := \operatorname{span}\{\by; \, \bM_\bA \by = \lambda \bM_\bB \by \text{ with } \lambda < \tau \},
\]
and
\[
\mathcal Y_H(\tau, \bM_\bA, \bM_\bB) := \operatorname{span}\{\by; \, \bM_\bA \by = \lambda \bM_\bB \by \text{ with } \lambda \geq \tau \}.
\]
\end{definition}
There are choices of $\tau$ for which $\mathcal Y_L$ or $\mathcal Y_H$ may be empty (the space spanned by an empty set of vectors is empty).

\begin{definition}[Constant $C_\sharp$]
\label{def:Csharp}
Let $C_\sharp > 0$ be such that, for every $s=1,\dots,N$ and every $ \bx \s \in \mathbb R^{n\s}$,  
\[
\| \KPs \bR\s^\top \bx \s \|_\bA^2 \leq C_\sharp^{-1} | \bx\s|_{\tilde \bA\s}^2, 
\]
where $\KPs$ is the $\bA$-orthogonal projection characterized \footnote{If the columns in $\tilde Z\s$ form a basis for $\bR\s^\top \operatorname{Ker}(\tilde \bA \s)$ then $\KPs := \matid - \tilde \bZ\s (\tilde \bZ \s^\top \bA \tilde \bZ\s)^{-1} \tilde \bZ \s^\top \bA$} by $\operatorname{Ker}(\KPs) = \bR\s^\top\operatorname{Ker}(\tilde \bA \s)$. (If $\tilde \bA\s$ is non-singular then $\KPs$ is the identity matrix.) 
\end{definition}
The existence of such a $C_\sharp$ is clear if $\tilde \bA\s$ is non-singular: $C_\sharp$ is one of the constants in the equivalence of the norms induced by $\tilde \bA\s$ and $\bR \s\bA\bR\s^\top$. Otherwise, we notice that the terms on either side of the inequality are the semi-norms of $ \bx \s$ induced by $\bR \s{\KPs}\,^\top \bA\KPs\bR\s^\top$ and $\tilde \bA \s$. The kernel of both these operators is $\operatorname{Ker}(\tilde \bA \s)$ so they are both norms on $\range(\tilde \bA \s)$ (by \cite[Problem 5.1.P2]{hornjoh:85} and  $C_\sharp$ is one of the constants in the equivalence of these norms.

\subsection{GenEO coarse spaces and convergence results}
Recall that for any matrix $\bB$, the notation $\lambda(\bB)$ refers to any of its eigenvalues. In the next two theorems, GenEO coarse spaces are defined and bounds for the eigenvalues of the preconditioned operators are provided. These do not depend on the number of subdomains or on any parameters in $\bA$. A well-known result (see \textit{e.g.}, \cite[Lemma C.10]{ToselliWidlund_book2005}) implies that the convergence of the (projected) preconditioned conjugate gradient does not depend on these quantities either.

\begin{theorem}[Spectral results for the projected and hybrid preconditioners]
\label{th:main}
For all results in the theorem, it is assumed that Assumptions~\ref{ass:RsVs-Astilde-Hspd} and \ref{ass:V0} hold. Let $\tau >0$  be a user-chosen threshold. 
(Recall Definition~\ref{def:color} for the coloring constant $\Ncol$ and Definition~\ref{def:YLYH} for the spaces $\mathcal Y_L$ and $\mathcal Y_R$ spanned by low and high-frequency eigenvectors of a certain generalized eigenvalue problem.)

\begin{enumerate} 
\item{(Natural coarse space)} With $C_\sharp$ from Definition~\ref{def:Csharp}, if for any $s \in \unN$, $\left(\bR\s^\top \operatorname{Ker}(\tilde \bA \s)\right) \subset V\0$ then 
\begin{equation}
\label{eq:th1-nat}
\lambda(\bH \bA \bPi) \leq  \Ncol{C_\sharp}^{-1} 
\text{ and }
\lambda(\bH_{hyb} \bA )  \leq  \max\left(1,{\Ncol}{C_\sharp}^{-1}\right). 
\end{equation}
\item {(GenEO for $\lambda_{\max}$)}If for any $s \in \unN$,  $\left(\bR\s^\top \mathcal Y_L(\tau, \tilde \bA \s, \bR\s \bA  \bR\s^\top)\right)  \subset V\0 $ then 
\begin{equation}
\label{eq:th1-lambdamax}
\lambda(\bH \bA \bPi)  \leq  \Ncol {\tau}^{-1} 
\text{ and }
 \lambda(\bH_{hyb} \bA )  \leq  \max(1, \Ncol {\tau}^{-1}  ).
\end{equation}
\item {(GenEO for $\lambda_{\min}$)} Under Assumption~\ref{ass:stable-split-matrix} (or \ref{ass:stable-split-matrix-withDs}), if for any $s \in \unN$, 
\begin{enumerate}
\item either  $\tilde \bA \s$ is non-singular, and $\left(\bR\s^\top \mathcal Y_L(\tau^{-1}, \bM \s,  \tilde \bA \s )\right) \subset V\0 $,
\item \label{it:noWs} or $\bM\s$ is non-singular and $\left(\bR\s^\top 
\left[ \operatorname{Ker}(\tilde \bA \s)+ \mathcal Y_H(\tau, \tilde \bA \s , \bM \s)\right]
 \right) \subset V\0 $, 
\item \label{it:Ws} or $\bW\s \in \mathbb R^{n\s \times \operatorname{rank}( \bM\s)}$ is a matrix whose columns form an $\ell_2$-orthonormal basis of $\range(\bM\s)$
and
 \\ $\left(\bR\s^\top 
\left[ \operatorname{Ker}(\tilde \bA \s)+  \operatorname{Ker}(\bM\s) +  \bW\s \mathcal Y_H(\tau,  \bW\s^\top \tilde \bA \s \bW\s, \bW\s^\top \bM \s  \bW\s)\right]
 \right) \subset V\0 $,
\end{enumerate}
 then  
\begin{equation} 
\label{eq:th1-lambdamin}
  \left( \lambda(\bH\bA\bPi)=0 \text{ or } (\tau \Nprime)^{-1}  \leq  \lambda(\bH \bA \bPi)\right) 
\text{ and }
  \min (1,  (\tau \Nprime)^{-1} )  \leq  \lambda(\bH_{hyb} \bA ), 
\end{equation}
(with $\Nprime$ from Assumption~\ref{ass:stable-split-matrix} (or \ref{ass:stable-split-matrix-withDs})).
\end{enumerate}
\end{theorem}
\begin{proof}
By Lemma~\ref{lem:Csharp} (see below), \eqref{eq:th1-nat} is a consequence of $\eqref{eq:th1-lambdamax}$ with $\tau = C_\sharp$. We also notice that in the Assumptions for \eqref{eq:th1-lambdamin} , Case~\ref{it:noWs} is already included in case~\ref{it:Ws} (setting $\bW\s = I$). It remains to prove \eqref{eq:th1-lambdamax} and \eqref{eq:th1-lambdamin} without the assumption labelled~\ref{it:noWs}. These proofs are core results of the article. Result \eqref{eq:th1-lambdamax} for $\bH\bA\bPi$ is rewritten and proved in Theorem~\ref{th:lambdamaxHApi}.  Result \eqref{eq:th1-lambdamin} for $\bH\bA\bPi$ is rewritten and proved in Theorem~\ref{th:lambdaminHApi}. The results for $\bH_{hyb} \bA$ are then deduced by applying Theorem~\ref{th:projhyb}. 
\end{proof}
The presence of $\bW\s$ in the last case is to ensure that the matrix on the right-hand side of the generalized eigenvalue problem is spd. Faced with the same difficulty, the authors in \cite[Section 7]{MR3450068} make use of projection operators. \textcolor{black}{The next theorem is for the fully additive preconditioner. Additive coarse space correction is usually considered only for the choice $\tilde\bA\s = \bR\s \bA\bR\s^\top$ (which results in the so-called Additive Schwarz preconditioner).} 

\begin{theorem}[Spectral results for the additive preconditioner]
\label{th:mainadd}
For all results in the theorem, it is assumed that Assumptions~\ref{ass:RsVs-Astilde-Hspd} and \ref{ass:V0} hold. It is also assumed that all local operators  $\tilde \bA \s$ are non-singular (for every $s \in \unN$). Finally, let $\tau, \upsilon >0$  be two user-chosen thresholds. 
(Recall Definition~\ref{def:color} for the coloring constant $\Ncol$ and Definition~\ref{def:YLYH} for the spaces $\mathcal Y_L$ and $\mathcal Y_R$ spanned by low and high-frequency eigenvectors of a certain generalized eigenvalue problem.)

\begin{enumerate} 
\item (Natural bound for $\lambda_{\max}$) With $C_\sharp$ from Definition~\ref{def:Csharp}, it holds that
\begin{equation}
\label{eq:th2-up}
\lambda(\bH_{ad} \bA ) \leq {\Ncol}{C_\sharp^{-1}} + 1. 
\end{equation}

\item (GenEO for $\lambda_{\min}$) Under Assumption~\ref{ass:stable-split-matrix} (or \ref{ass:stable-split-matrix-withDs}), with $C_\sharp$ from Definition~\ref{def:Csharp}, if for any $s \in \unN$,  
\begin{enumerate}
\item either $\left(\bR\s^\top \mathcal Y_L(\tau^{-1}, \bM \s,  \tilde \bA \s )\right) \subset V\0 $ ,
\item or $\bM\s$ is non-singular,  and $\left(\bR\s^\top  \mathcal Y_H(\tau, \tilde \bA \s , \bM \s) \right) \subset V\0 $,
\item or 
$\bW\s \in \mathbb R^{n\s \times \operatorname{rank}( \bM\s)}$ is a matrix whose columns form an $\ell_2$-orthonormal basis of $\range(\bM\s)$
and
 $\left(\bR\s^\top 
\left[ \operatorname{Ker}(\bM\s) +  \bW\s \mathcal Y_H(\tau,  \bW\s^\top \tilde \bA \s \bW\s, \bW\s^\top \bM \s  \bW\s)\right]
 \right) \subset V\0 $,   
\end{enumerate}
then
\begin{equation}
\label{eq:th2-nat}
\left[\max\left(2, 1 + 2 {\Ncol}{C_\sharp^{-1}}\right)\max(1, \Nprime \tau)  \right]^{-1}\leq \lambda(\bH_{ad} \bA ),
\end{equation}
(with $\Nprime$ from Assumption~\ref{ass:stable-split-matrix} (or \ref{ass:stable-split-matrix-withDs})).
\item (Double GenEO for $\lambda_{\min}$) Under Assumption~\ref{ass:stable-split-matrix} (or \ref{ass:stable-split-matrix-withDs}), if for any $s \in \unN$,   
\\ $\bR\s^\top \mathcal Y_L(\upsilon, \tilde \bA \s, \bR\s \bA  \bR\s^\top) \subset V_0$
and
\begin{enumerate}
\item  either $\left(\bR\s^\top \mathcal Y_L(\tau^{-1}, \bM \s,  \tilde \bA \s )\right) \subset V\0 $ ,
\item \label{it:2noWs} or $\bM\s$ is non-singular,  and $\left(\bR\s^\top  \mathcal Y_H(\tau, \tilde \bA \s , \bM \s) \right) \subset V\0 $,
\item \label{it:2Ws} or 
$\bW\s \in \mathbb R^{n\s \times \operatorname{rank}( \bM\s)}$ is a matrix whose columns form an $\ell_2$-orthonormal basis of $\range(\bM\s)$
and
 $\left(\bR\s^\top 
\left[ \operatorname{Ker}(\bM\s) +  \bW\s \mathcal Y_H(\tau,  \bW\s^\top \tilde \bA \s \bW\s, \bW\s^\top \bM \s  \bW\s)\right]
 \right) \subset V\0 $   
\end{enumerate}
then
\begin{equation}
\label{eq:th2-hard}
\left[\max\left(2, 1 + 2 {\Ncol}{\upsilon^{-1}}\right)\max(1, \Nprime \tau)  \right]^{-1} \leq \lambda(\bH_{ad} \bA ),
\end{equation}
(with $\Nprime$ from Assumption~\ref{ass:stable-split-matrix} (or \ref{ass:stable-split-matrix-withDs})).
\end{enumerate}
\end{theorem}
\begin{proof}
It has been assumed that the $\tilde \bA\s$ are non-singular so equation~\eqref{eq:th1-nat} in Theorem~\ref{th:main} applies to the one-level preconditioner to give $\lambda(\bH \bA) \leq  \Ncol{C_\sharp}^{-1}$. 
Moreover, $\bH_{ad} \bA - \bH \bA = \bR\0^\top (\bR\0 \bA \bR\0^\top)^{-1} \bR\0 \bA$ which is a projection. Projections have eigenvalues in $\{0, 1\}$ so \eqref{eq:th2-up} holds with no restriction on the coarse space. 

By Lemma~\ref{lem:Csharp} (see below), \eqref{eq:th2-nat} is a consequence of $\eqref{eq:th2-hard}$ with $\upsilon = C_\sharp$ and non-singular $\tilde\bA\s$. We also notice that in the Assumptions for \eqref{eq:th2-hard} , Case~\ref{it:2noWs} is already included in case~\ref{it:2Ws} (setting $\bW\s = I$). It remains to prove \eqref{eq:th2-hard} without the assumption labelled~\ref{it:2noWs}. This follows from Theorem~\ref{th:add} with 
  $\min (1,  (\tau \Nprime)^{-1} )  \leq  \lambda(\bH_{hyb} \bA )$ 
 according to \eqref{eq:th1-lambdamin} in Theorem~\ref{th:main}.
\end{proof}

\subsection{Introduction to the proofs}

Subsets that are spanned by eigenvectors of a well-chosen generalized eigenvalue problem have very useful properties. Some of these are recalled for illustration and for further reference. 

\begin{lemma}
\label{lem:gevp-YLYH}
Let $m \in \mathbb N^*$, let $\bM_\bA \in \mathbb R^{m\times m}$ be an spsd matrix, let $\bM_\bB \in \mathbb R^{m\times m}$ be an spd matrix, and let $\tau >0$. With the notation from Definition~\ref{def:YLYH}, the two following properties hold
\begin{itemize}
\item spectral estimates:
\begin{equation}
\label{eq:defYLYH}
\left\{
\begin{array}{l}
|y|_{\bM_\bA}^2 < \tau \|y\|_{\bM_\bB}^2 \text{ for any } \by \in \mathcal Y_L(\tau, \bM_\bA, \bM_\bB), 
\\ 
|y|_{\bM_\bA}^2 \geq \tau \|y\|_{\bM_\bB}^2  \text{ for any } \by \in \mathcal Y_H(\tau, \bM_\bA, \bM_\bB), 
\end{array}
\right.
\end{equation}
\item conjugacy:
\begin{equation}
\label{eq:conjugacy}
\left\{
\begin{array}{l}
\left(\mathcal Y_L(\tau, \bM_\bA, \bM_\bB)\right)^{\perp^{\ell_2}} = \bM_\bB \mathcal Y_H(\tau, \bM_\bA, \bM_\bB)
\\
\left(\mathcal Y_H(\tau, \bM_\bA, \bM_\bB)\right)^{\perp^{\ell_2}} = \bM_\bB \mathcal Y_L(\tau, \bM_\bA, \bM_\bB). 
\end{array}
\right.
\end{equation}
\end{itemize}
\end{lemma}

\begin{proof}
By assumption $\bM_B$ is spd, so the generalized eigenvalue problem $\bM_A \by= \lambda \bM_B \by$ from Definition~\ref{def:YLYH} is equivalent to the classical eigenvalue problem
\[
\bM_{B}^{-1/2} \bM_A \bM_{B}^{-1/2} \bz = \lambda \bz, \text{ with } \bz = \bM_B^{1/2} \by. 
\]
By the spectral theorem \cite[Theorem 4.1.5]{hornjoh:85} applied to the spsd matrix $\bM_B^{-1/2} \bM_A \bM_B^{-1/2}$, this matrix is unitarily diagonalizable so there exists an orthonormal basis of $\mathbb R^n$ formed of eigenvectors $\bz_k$. Consequently, the vectors $\by_k = \bM_B^{-1/2} \bz_k$ form an $\bM_B$-orthonormal basis of $\mathbb R^n$ and everything else follows by analogy with the classical eigenvalue problem. 
\end{proof}

\begin{lemma}[Natural coarse space as a spectral coarse space]
\label{lem:Csharp}
Let Assumption~\ref{ass:RsVs-Astilde-Hspd} hold, then 
\[
\mathcal Y_L (C_\sharp, \tilde \bA \s, \bR\s \bA  \bR\s^\top) = \operatorname{Ker}(\tilde \bA \s), \text{ for any $s\in \unN$} 
\] 
where $C_\sharp >0$ comes from Definition~\ref{def:Csharp}.
\end{lemma}
\begin{proof}
Let $s \in \llbracket 1, N \rrbracket$. It is clear that $\operatorname{Ker}(\tilde \bA \s) \subset \mathcal Y_L (C_\sharp, \tilde \bA \s, \bR\s \bA  \bR\s^\top)$. It remains to prove that all non-zero eigenvalues $\lambda$ in the eigenvalue problem that defines $\mathcal Y_L (C_\sharp, \tilde \bA \s, \bR\s \bA  \bR\s^\top)$ are greater than $C_\sharp$. Let $\lambda \neq 0$ and $\bx \in \mathbb R^n \setminus \{\mathbf 0 \}$ satisfy 
$\tilde \bA \s \by = \lambda \bR\s \bA \bR\s^\top \by$.
 
Let $\KPs$ be as in Definition~\ref{def:Csharp} ($\bA$-orthogonal projection with $\operatorname{Ker}(\KPs) = \bR\s^\top\operatorname{Ker}(\tilde \bA \s)$). Next, we prove that  $\KPs \bR\s^\top \by = \bR\s^\top \by$ in the non-obvious case where $\tilde \bA\s$ is singular: 
\[
\range(\KPs) = \left(\operatorname{Ker}(\KPs)\right)^{\perp^\bA} = \left(\bR\s^\top \operatorname{Ker}(\tilde \bA \s)\right)^{\perp^\bA} \text { so } \bR\s^\top \by \in \range(\KPs) \Leftrightarrow \by \perp^{(\bR\s \bA \bR\s^\top)} \operatorname{Ker}(\tilde \bA\s).  
\] 
and this last assertion is true following the conjugacy property of eigenvectors (\eqref{eq:conjugacy} in Lemma~\ref{lem:gevp-YLYH}). 
We can now conclude since 
\[
 \|\by\|_{\bR\s \bA \bR\s^\top}^2= \| \bR\s^\top \by \|_\bA^2 = \| \KPs \bR\s^\top \by \|_\bA^2 \leq C_\sharp^{-1} | \by|_{\tilde \bA\s}^2 =  C_\sharp^{-1} \lambda \|\by\|_{\bR\s \bA \bR\s^\top}^2, 
\]
where the definition of $C_\sharp$ has been applied. Cancelling the common factor $\|\by\|_{\bR\s \bA \bR\s^\top}^2 \neq 0$ allows to conclude that $C_\sharp^{-1} \lambda \geq 1$. 
\end{proof}

\section{Convergence proofs for the projected preconditioner}
\label{sec:GenEO}

\subsection{Spectral bounds in the abstract framework}
The abstract Schwarz theory presented in \cite{ToselliWidlund_book2005}[Chapters 2 and 3] provides theoretical results that greatly simplify the problem of finding eigenvalue bounds for the projected preconditioned operator. For the bound on the largest eigenvalue, the results are \cite{ToselliWidlund_book2005}[Assumption 2.3, Assumption 2.4, Lemma 2.6, Lemma 2.10 and Theorem 2.13]. For the bound on the smallest eigenvalue, the results for the projected operator can be found in \cite{ToselliWidlund_book2005}[Theorem 2.13 under Assumption 2.12 (that weakens Assumption 2.2 by considering only elements in $\range(\bPi)$)]. In this section, we state and prove very similar results with the generalization that $\tilde \bA \s$ can be singular and with $\bPi$ playing a more central role in the assumptions. 

\begin{remark}
The pseudo-inverse $\bM^\dagger$ of any real matrix $\bM$ is also called the Moore—Penrose inverse of $\bM$. It is defined \textit{e.g.}, in \cite[Problem 7.3.P7]{hornjoh:85} and has the following properties that we will refer back to in the proofs involving $\tilde \bA\s^\dagger$:
\begin{equation}
\label{eq:prop-pseudoinv}
\bM^\dagger\bM\bM^\dagger=\bM^\dagger; \quad \bM\bM^\dagger\bM = \bM; \quad \text{and} \quad \range(\bM^\dagger) = \range(\bM^\top). 
\end{equation}
By symmetry, the last property implies that $\range(\tilde \bA\s^\dagger) = \range(\tilde \bA \s)$.
\end{remark}

Next, the abstract result used to bound $\lambda_{\max}$ is given and proved. Note that a difference with \cite{ToselliWidlund_book2005}[Assumption 2.4] is that the result must be proved for vectors in $\range (\tilde \bA\s^\dagger \bR\s \bPi^\top) $, instead of  $\range ( \tilde \bA\s^\dagger \bR\s) $. This subtlety is what will allow to choose the coarse space, and already appeared in \cite{SPILLANE:2013:FETI_GenEO_IJNME}[Lemma 2.8, Lemma 3.12] in the particular settings of BDD and FETI. Another difference, is the presence of a projection operator $\KPs$ in the assumption of the lemma. This weakens the assumption as long as the kernel of $\tilde \bA \s$ (once extended to the global space) is in the coarse space.    

\begin{lemma}[Upper bound for $\lambda_{\max}$]
\label{lem:upper}
Assume that the kernels of the local solvers $\tilde\bA\s$ contribute to the coarse space in the sense that 
$\sum_{s=1}^N \bR\s^\top \operatorname{Ker}(\tilde \bA \s) \subset V\0$.  
For each $s=1,\dots,N$, let $\KPs$ be the $\bA$-orthogonal projection characterized  by $\operatorname{Ker}(\KPs) = \bR\s^\top\operatorname{Ker}(\tilde \bA \s)$.  Assume that there exists $\omega > 0$ such that 
\[
\| \KPs \bR\s^\top \bx \s \|_\bA^2 \leq \omega | \bx\s|_{\tilde \bA\s}^2 \text{ for every } s = 1, \dots,N \text{ and every } \bx \s \in \range (\tilde \bA\s^\dagger \bR\s \bPi^\top).
\]
Then, the largest eigenvalue $\lambda_{\max}$ of $\bH \bA \bPi$ satisfies
 $\lambda_{\max} \leq \Ncol \omega$, 
where $\Ncol$ is as in Definition~\ref{def:color}.
\end{lemma}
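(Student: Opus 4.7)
The plan is to start from an eigenvalue equation $\bH\bA\bPi\bv = \lambda\bv$ with $\lambda \neq 0$, split $\bH\bA\bPi\bv$ into local contributions, and close the argument by combining the local estimate from the hypothesis with a coloring bound. I would first set $\bx\s := {\tilde\bA\s}\,^\dagger\bR\s\bA\bPi\bv$, so that $\bH\bA\bPi\bv = \sum_{s=1}^N {\bR\s}^\top\bx\s$. Because $\bA\bPi$ is symmetric (it is the Euclidean matrix of the $\bA$-orthogonal projection onto $\range(\bPi)$), one has $\bx\s = {\tilde\bA\s}\,^\dagger\bR\s\bPi^\top\bA\bv \in \range({\tilde\bA\s}\,^\dagger\bR\s\bPi^\top)$, so each $\bx\s$ sits precisely where the local hypothesis is available. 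Testing the eigenvalue identity against $\bA\bPi\bv$ and using the pseudo-inverse properties \eqref{eq:prop-pseudoinv} — in particular that $\tilde\bA\s{\tilde\bA\s}\,^\dagger$ is the Euclidean orthogonal projection onto $\range(\tilde\bA\s)$ and that $\bx\s \in \range(\tilde\bA\s)$, so the kernel component of $\bR\s\bPi^\top\bA\bv$ drops out — should produce the \emph{energy identity}
\[
\lambda\|\bPi\bv\|_\bA^2 = \sum_{s=1}^N |\bx\s|_{\tilde\bA\s}^2,
\]
where I also use $\langle\bA\bPi\bv,\bv\rangle = \|\bPi\bv\|_\bA^2$.

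Next I would apply $\bPi$ to the eigenvalue identity to get $\lambda\bPi\bv = \sum_s\bPi\,{\bR\s}^\top\bx\s$. The key observation is that for each $s$ the vector $(\matid - \KPs){\bR\s}^\top\bx\s$ lies in ${\bR\s}^\top\operatorname{Ker}(\tilde\bA\s) \subset V\0 = \operatorname{Ker}(\bPi)$, so $\bPi\,{\bR\s}^\top\bx\s = \bPi\,\KPs{\bR\s}^\top\bx\s$. Since $\bPi$ is an $\bA$-orthogonal projection, hence a contraction in $\|\cdot\|_\bA$, this yields
\[
\lambda^2\|\bPi\bv\|_\bA^2 = \Bigl\|\bPi\sum_{s=1}^N\KPs{\bR\s}^\top\bx\s\Bigr\|_\bA^2 \leq \Bigl\|\sum_{s=1}^N\KPs{\bR\s}^\top\bx\s\Bigr\|_\bA^2.
\]

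The last step is the coloring argument. Although $\KPs{\bR\s}^\top\bx\s$ is not obviously in $V\s$, the piece $(\matid - \KPs){\bR\s}^\top\bx\s$ lies in ${\bR\s}^\top\operatorname{Ker}(\tilde\bA\s) \subset V\s$, so $\KPs{\bR\s}^\top\bx\s \in V\s = \range({\bR\s}^\top)$ and I may write it as ${\bR\s}^\top\bw\s$. For $s \neq t$ in a common color class $\mathcal{C}_j$, Definition~\ref{def:color} then gives $\langle\KPs{\bR\s}^\top\bx\s,\, \tilde\bPi\st{\bR\st}^\top\bx\st\rangle_\bA = {\bw\s}^\top\bR\s\bA{\bR\st}^\top\bw\st = 0$. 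Grouping by color and a Cauchy--Schwarz across the $\mathcal{N}$ color classes give $\|\sum_s\KPs{\bR\s}^\top\bx\s\|_\bA^2 \leq \mathcal{N}\sum_s\|\KPs{\bR\s}^\top\bx\s\|_\bA^2$, and the hypothesis combined with the energy identity bounds this by $\mathcal{N}\omega\sum_s|\bx\s|_{\tilde\bA\s}^2 = \mathcal{N}\omega\lambda\|\bPi\bv\|_\bA^2$. Combining displays yields $\lambda^2\|\bPi\bv\|_\bA^2 \leq \mathcal{N}\omega\lambda\|\bPi\bv\|_\bA^2$; a short check shows that $\bPi\bv = 0$ together with $\lambda \neq 0$ would force $\bv = 0$ via $\bH\bA\bPi\bv = \lambda\bv$, so dividing by $\|\bPi\bv\|_\bA^2 > 0$ gives $\lambda \leq \mathcal{N}\omega$.

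The hardest part will be the delicate bookkeeping around singular $\tilde\bA\s$ and the double role of the hypothesis $\sum_s{\bR\s}^\top\operatorname{Ker}(\tilde\bA\s) \subset V\0$: the inclusion in $V\0 = \operatorname{Ker}(\bPi)$ is what lets $\bPi$ absorb the $(\matid - \KPs)$ piece (so that the local hypothesis, which controls $\KPs{\bR\s}^\top\bx\s$ rather than ${\bR\s}^\top\bx\s$, can be applied), while the implicit inclusion in $V\s$ is what preserves the color-orthogonality after multiplying by $\KPs$. The identities of \eqref{eq:prop-pseudoinv} must also be used carefully so that no kernel component of $\bR\s\bPi^\top\bA\bv$ spoils the energy identity.
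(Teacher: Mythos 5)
Your proof is correct and follows essentially the same route as the paper's: both arguments hinge on inserting $\KPs$ in place of $\bPi$ via $\range(\matid-\KPs)={\bR\s}^\top\operatorname{Ker}(\tilde\bA\s)\subset\operatorname{Ker}(\bPi)$, the coloring orthogonality plus Cauchy--Schwarz over the $\mathcal N$ color classes, the local hypothesis applied to $\bx\s\in\range(\tilde\bA\s\,^\dagger\bR\s\bPi^\top)$, and the pseudo-inverse identity giving $\sum_s|\bx\s|_{\tilde\bA\s}^2=\langle\bA\bPi\bv,\bH\bA\bPi\bv\rangle$. The only differences are organizational (you work with the eigenvector of $\bH\bA\bPi$ directly rather than first proving the intermediary inequality $\|\bPi\bH\bx\|_\bA^2\leq\omega\mathcal N\|\bx\|_\bH^2$), plus the minor point that cancelling $\lambda$ in $\lambda^2\leq\mathcal N\omega\lambda$ uses $\lambda>0$, which your energy identity already supplies.
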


\begin{proof}
Let $\bx \in \range(\bPi^\top)$. By assumption it holds that
\[
\|\KPs  \bR\s^\top \tilde \bA\s^\dagger \bR\s \bx  \|_{\bA} \leq \omega \langle \tilde \bA\s^\dagger \bR\s \bx , \tilde \bA \s \tilde \bA\s^\dagger \bR\s \bx  \rangle, \text{ for any } s = 1,\dots,N.
\]
With the notation $\bH\s := \bR\s^\top \tilde\bA\s^\dagger \bR\s$, this is equivalent to 
\begin{equation}
\label{eq:interm}
\|\KPs  \bH \s \bx \|_\bA^2 \leq \omega | \bx |_{\bH\s}^2.
\end{equation}

We next prove the intermediary result $\|\bPi \bH \bx \|_\bA^2 \leq \omega \Ncol \left\|\bx \right\|_{\bH}^2$ as follows
\begin{align*}
\|\bPi \bH \bx \|_\bA^2 &= \left\| \sum_{s=1}^N\bPi \bH\s \bx \right\|_\bA^2 
 = \left\| \sum_{j=1}^\Ncol \sum_{s\in \mathcal C_j} \bPi \bH\s \bx\right \|_\bA^2 
 \leq \left(  \sum_{j=1}^\Ncol \left\|\sum_{s\in \mathcal C_j} \bPi \bH\s \bx \right\|_\bA\right)^2 \\
& \leq \Ncol \sum_{j=1}^\Ncol \left\|\sum_{s\in \mathcal C_j} \bPi \bH\s \bx \right\|_\bA ^2
 \leq \Ncol \sum_{j=1}^\Ncol \left\|\sum_{s\in \mathcal C_j} \KPs \bH\s \bx \right\|_\bA ^2
 = \Ncol \sum_{j=1}^\Ncol \sum_{s\in \mathcal C_j} \left \|\KPs \bH\s \bx \right\|_\bA ^2 \\
 &\leq \Ncol \sum_{j=1}^\Ncol \sum_{s\in \mathcal C_j} \omega \left|\bx \right|_{\bH\s} ^2 = \Ncol \sum_{s=1}^N \omega \left|\bx \right|_{\bH\s} ^2
 = \omega \Ncol \left\|\bx \right\|_{\bH}^2,
\end{align*}
where in the first line the sets $\mathcal C_j$ are as in Definition~\ref{def:color}; in the second line the Cauchy-Schwarz estimate in the $\ell_2$-inner product, the definition of $\KPs$, as well as the definition of the sets $\mathcal C_j$ are applied; and \eqref{eq:interm} is injected into the third line. 

Next, we prove the bound for $\lambda_{\max}$ starting with the definition of an eigenvalue: 
\begin{align}
\lambda_{\max} \text{ is an eigenvalue of $\bH \bA \bPi$ } &\Leftrightarrow \lambda_{\max} \text{ is an eigenvalue of $\bPi^\top \bA \bH$ } \nonumber\\ 
&\Leftrightarrow \, \exists\, \by \in \mathbb R^n;\, \by \neq \mathbf{0} \text{ such that } \bPi^\top \bA \bH \by = \lambda_{\max} \by\label{eq:interm2}.
\end{align}
Let $\by$ be as in \eqref{eq:interm2}. It is obvious that $\by \in \range(\bPi^\top)$. Taking the inner product of \eqref{eq:interm2} by $\bH \by$, and injecting the intermediary result that was just proved gives 
\begin{equation}
\label{eq:interm3}
\lambda_{\max} \|\by\|_\bH^2 = \langle \bH \by, \bPi^\top \bA \bH \by \rangle = \|\bPi \bH \by\|_\bA^2 \leq \omega \Ncol \| \by \|_{\bH}^2.  
\end{equation}
The common factor $ \|\by\|_\bH^2$ can be cancelled since $\|\by\|_\bH^2 = 0$ would imply $\lambda_{\max} = 0$, and $\bPi = 0$. 
\end{proof}

The abstract Schwarz theory (\cite{ToselliWidlund_book2005}[Theorem 2.13 under Assumption 2.12]) also provides a result for bounding the spectrum of the two-level operator from below. The result proved in the next lemma is similar with the differences that are pointed out below the lemma. 

\begin{lemma}[Lower bound for $\lambda_{\min}$]
\label{lem:stabsplit}
Assume that the kernels of the local solvers contribute to the coarse space in the sense that
 $\sum_{s=1}^N \bR\s^\top \operatorname{Ker}(\tilde \bA \s) \subset V\0$.  
If, for any $\bx \in \operatorname{range}(\bPi)$, there exist $\bz_1, \dots, \bz_n$ such that
\[
\bx = \sum_{s=1}^N \bPi \bR\s^\top \bz \s \text{ and } \sum_{s=1}^N \langle \bz \s, \tilde \bA \s \bz \s \rangle \leq C_0^2 \langle \bx, \bA \bx \rangle \quad\text{(stable splitting of $\bx$)},
\]
then, the smallest eigenvalue $\lambda_{\min}$ of $\bH \bA \bPi$, excluding zero, satisfies
 $\lambda_{\min} \geq C_0^{-2}$.
\end{lemma}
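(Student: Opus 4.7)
The plan is to mirror the structure of the proof of Lemma~\ref{lem:upper}: pass to the operator $\bPi^\top\bA\bH$, which has the same nonzero eigenvalues as $\bH\bA\bPi$, derive the identity $\lambda\|\by\|_\bH^2 = \|\bPi\bH\by\|_\bA^2$ for every such eigenpair (exactly as in the final display of that proof, using $\bA\bPi = \bPi^\top\bA$, $\bPi^2 = \bPi$, and $\bPi^\top\bA\bPi = \bA\bPi$), and then use the stable-splitting hypothesis to produce a matching lower bound on $\|\bPi\bH\by\|_\bA^2$. Cancelling the common factor $\lambda\|\by\|_\bH^2 > 0$ in the resulting inequality will deliver $\lambda \geq C_0^{-2}$.

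The core step is the intermediary estimate (an analogue of \eqref{eq:interm} in the previous lemma):
\[
\|\bx\|_\bA^2 \leq C_0^2 \langle \bA\bx, \bH\bA\bx\rangle \qquad \text{for every } \bx \in \operatorname{range}(\bPi).
\]
To prove it I would fix such an $\bx$, apply the stable splitting $\bx = \sum_s \bPi\, {\bR\s}^\top \bz\s$, and use $\bA\bPi = \bPi^\top\bA$ together with $\bPi\bx = \bx$ to rewrite $\|\bx\|_\bA^2 = \sum_s \langle \bz\s, \bR\s\bA\bx\rangle$.

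The step I expect to be the main subtlety -- and where the kernel-inclusion hypothesis really earns its keep -- is to show that $\bR\s\bA\bx \in \operatorname{range}(\tilde\bA\s)$ for every $s$. This is what licenses the use of Cauchy-Schwarz in the $\tilde\bA\s$ semi-inner product and of $\tilde\bA\s\,^\dagger$ despite the possibly singular local solvers. The argument: for any $\bu\s \in \operatorname{Ker}(\tilde\bA\s)$, the vector ${\bR\s}^\top \bu\s$ belongs to $V\0 = \operatorname{ker}(\bPi)$ by assumption, while $\bx \in \operatorname{range}(\bPi)$ is $\bA$-orthogonal to $\operatorname{ker}(\bPi)$, so $\langle \bR\s\bA\bx, \bu\s\rangle = \langle \bA\bx, {\bR\s}^\top \bu\s\rangle = 0$; hence $\bR\s\bA\bx \perp \operatorname{Ker}(\tilde\bA\s) = \operatorname{range}(\tilde\bA\s)^\perp$.

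With that in hand, write $\bR\s\bA\bx = \tilde\bA\s\, \tilde\bA\s\,^\dagger \bR\s\bA\bx$, apply Cauchy-Schwarz in the $\tilde\bA\s$ semi-inner product on each subdomain, then Cauchy-Schwarz in $\ell_2$ across subdomains, and invoke both the stable-splitting bound $\sum_s |\bz\s|_{\tilde\bA\s}^2 \leq C_0^2 \|\bx\|_\bA^2$ and the identity $\sum_s \langle \bR\s\bA\bx, \tilde\bA\s\,^\dagger \bR\s\bA\bx\rangle = \langle \bA\bx, \bH\bA\bx\rangle$ (immediate from the definition of $\bH$) to conclude the intermediary estimate after dividing by $\|\bx\|_\bA$. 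To finish, apply this estimate to $\bx := \bPi\bH\by$, where $\by$ is an eigenvector of $\bPi^\top\bA\bH$ associated with $\lambda_{\min}$: the eigenvalue equation gives $\bA\bx = \bA\bPi\bH\by = \bPi^\top\bA\bH\by = \lambda_{\min}\by$, whence $\langle \bA\bx, \bH\bA\bx\rangle = \lambda_{\min}^2\|\by\|_\bH^2$. Together with $\|\bx\|_\bA^2 = \lambda_{\min}\|\by\|_\bH^2$ from the first paragraph, this yields $\lambda_{\min} \geq C_0^{-2}$.
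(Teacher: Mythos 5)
Your proposal is correct and takes essentially the same route as the paper: you prove the identical intermediary estimate $\langle \bA\bx, \bH\bA\bx\rangle \geq C_0^{-2}\|\bx\|_\bA^2$ on $\range(\bPi)$, with the same justification that $\bR\s\bA\bx \in \range(\tilde\bA\s)$ via the kernel-inclusion hypothesis. The only (cosmetic) difference is the final step, where you work with an eigenvector of $\bPi^\top\bA\bH$ and evaluate $\|\bPi\bH\by\|_\bA^2$ and $\langle\bA\bx,\bH\bA\bx\rangle$ in closed form, whereas the paper tests the eigenvalue equation for $\bH\bA\bPi$ directly against $\bA\bPi\bx$; both are two-line consequences of the same estimate.
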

The differences compared to \cite{ToselliWidlund_book2005}[Theorem 2.13] are the possible singularity of $\tilde \bA \s$, the extra presence of $\bPi$ in the definition of a splitting, and the extra assumption on the minimal coarse space.

\begin{proof}
Let $\bx \in \range(\bPi)$ and $\{\bz\s\}_{s=1,\dots,N}$ provide a stable splitting as defined in the lemma, then 
\begin{align*}
\langle \bx, \bA \bx \rangle = \sum_{s=1}^N \langle \bx, \bA \bPi \bR\s^\top \bz \s  \rangle = \sum_{s=1}^N \langle \tilde\bA\s^\dagger   \bR\s \bPi^\top \bA \bx, \tilde\bA\s\bz \s  \rangle. 
\end{align*}
Indeed $\tilde\bA\s\tilde\bA\s^\dagger   \bR\s \bPi^\top =  \bR\s \bPi^\top$ holds because of \eqref{eq:prop-pseudoinv} and
\[
 \range(  \bR\s \bPi^\top) =  \left(\operatorname{Ker} (\bPi \bR\s^\top)\right)^{\perp^{\ell_2}} \subset    \left(\operatorname{Ker} (\tilde \bA \s)\right)^{\perp^{\ell_2}}  =  \range(\tilde \bA\s) , 
\]
recalling that $\bR\s^\top \operatorname{Ker}(\tilde \bA \s) \subset V\0 = \operatorname{Ker}(\bPi)$. 
Next, the generalized Cauchy-Schwarz inequality for the semi-norm induced by $\tilde \bA \s$, the first property in \eqref{eq:prop-pseudoinv}, the Cauchy-Schwarz inequality in the $\ell_2$-inner product, and the stable splitting assumption are applied in order to get 
\begin{align*}
\langle \bx, \bA \bx \rangle &\leq \sum_{s=1}^N \langle \tilde\bA\s^\dagger   \bR\s \bPi^\top \bA \bx, \tilde\bA\s \tilde\bA\s^\dagger   \bR\s \bPi^\top \bA \bx \rangle^{1/2} \langle \bz\s, \tilde\bA\s\bz \s  \rangle^{1/2} \\
 &\leq \sum_{s=1}^N \langle \tilde\bA\s^\dagger   \bR\s \bPi^\top \bA \bx, \bR\s \bPi^\top \bA \bx \rangle^{1/2} \langle \bz\s, \tilde\bA\s\bz \s  \rangle^{1/2} \\
&\leq \left[\sum_{s=1}^N \langle \tilde\bA\s^\dagger   \bR\s \bPi^\top \bA \bx, \bR\s \bPi^\top \bA \bx \rangle \right]^{1/2} \left[\sum_{s=1}^N  \langle \bz\s, \tilde\bA\s\bz \s  \rangle\right]^{1/2} \\ 
&\leq \langle\bx, \bA\bH\bA \bx \rangle^{1/2} C_0 \langle \bx , \bA \bx \rangle^{1/2}.
\end{align*}
Squaring and cancelling the common factor $\langle \bx , \bA \bx \rangle$ ($\neq 0$ if $\bx \neq \mathbf 0$) yields
\begin{equation}
\label{eq:intermediate-lambdamin}
 \langle\bx, \bA\bH\bA \bx \rangle \geq C_0^{-2} \langle \bx, \bA \bx \rangle, \text{ for any } \bx \in \range(\bPi). 
\end{equation}
Finally, the bound for $\lambda_{\min}$ is proved starting with the definition of an eigenvalue: 
\begin{align*}
\lambda_{\min} \text{ is an eigenvalue of $\bH \bA \bPi$ } &\Leftrightarrow \, \exists\, \bx \in \mathbb R^n; \, \bx \neq \mathbf{0} \text{ such that } \bH \bA \bPi  \bx = \lambda_{\min} \bx.
\end{align*}
Let $\bx$ be such an eigenvector corresponding to eigenvalue $\lambda_{\min}$. By definition, $\lambda_{\min} \neq 0$ so $\bPi \bx \neq \mathbf 0$. Taking the inner product by $\bA\bPi\bx$ gives
\[
\langle \bPi \bx,\bA  \bH \bA \bPi  \bx \rangle = \lambda_{\min} \langle \bA \bPi \bx , \bx \rangle = \lambda_{\min} \langle \bA \bPi \bx , \bPi \bx \rangle \leq \lambda_{\min} C_0^2 \langle \bPi \bx,\bA  \bH \bA \bPi  \bx \rangle, 
\]
where the inequality comes from \eqref{eq:intermediate-lambdamin}. Cancelling the common factor $\langle \bPi \bx,\bA  \bH \bA \bPi  \bx \rangle \neq 0$, leads to the conclusion that $\lambda_{\min} C_0^2 \geq 1$. 
\end{proof}
\begin{theorem}[Spectral bounds for $\bH \bA \bPi$ $\Rightarrow$ Spectral bounds for $\bH_{hyb} \bA$]
\label{th:projhyb}
Let Assumptions~\ref{ass:RsVs-Astilde-Hspd} and \ref{ass:V0} hold. If the eigenvalues of the projected and preconditioned operator satisfy
\[
\lambda(\bH \bA \bPi) \in \{0\} \cup [\lambda_{\min}, \lambda_{\max}],  
\]
then the eigenvalues of the operator preconditioned by $\bH_{hyb}$ from Definition~\ref{def:precs} satisfy
 \[
\lambda(\bH_{hyb} \bA ) \in [\min (1,\lambda_{\min}), \max(1,\lambda_{\max})].
\]
\end{theorem}
\begin{proof}
The connection between the spectra of the projected and hybrid/balanced preconditioned operators is well known (see \textit{e.g.}, \cite{tang2009comparison,klawonn2012deflation}) and easy to verify. Let $\bx \in \mathbb R^n$, it holds that 
\begin{align*}
\langle \bx, \bA\bH_{hyb}\bA \bx \rangle 
& =\langle \bx, \bA\bPi \bH \bPi^\top \bA \bx \rangle + \langle \bx, \bA\bR\0^\top (\bR\0 \bA \bR\0^\top)^{-1} \bR\0 \bA \bx \rangle\\
&=\langle \bPi \bx, \bA \bH \bA \bPi \bx \rangle + \langle (\matid - \bPi) \bx, \bA (\matid - \bPi) \bx \rangle, 
\end{align*}
so, with the result for the projected preconditioned operator:
\[
\left\{
\begin{array}{rc}
&  \langle \bx, \bA\bH_{hyb}\bA \bx\rangle    \geq \lambda_{\text{min}} \langle \bPi \bx, \bA \bPi \bx \rangle + \langle (\matid - \bPi) \bx, \bA (\matid - \bPi) \bx \rangle
\\
\text{and }& \langle \bx, \bA\bH_{hyb}\bA \bx \rangle \leq \lambda_{\text{max}} \langle \bPi \bx, \bA \bPi \bx \rangle + \langle (\matid - \bPi) \bx, \bA (\matid - \bPi) \bx \rangle 
\end{array}
\right.
\]
and the result follows by recalling that $\bPi$ is an $\bA$-orthogonal projection so $ \langle \bx, \bA \bx \rangle =  \langle \bPi \bx, \bA \bPi \bx \rangle + \langle (\matid - \bPi) \bx, \bA (\matid - \bPi) \bx \rangle $. 
\end{proof}

\subsection{Proof of \eqref{eq:th1-lambdamax} for $\bH \bA\bPi$}

\begin{theorem}
\label{th:lambdamaxHApi}
Let Assumptions~\ref{ass:RsVs-Astilde-Hspd} and \ref{ass:V0} hold. Let $\tau >0$. 
If for any $s \in \unN$, it holds that  $\left(\bR\s^\top \mathcal Y_L(\tau, \tilde \bA \s, \bR\s \bA  \bR\s^\top)\right)  \subset V\0 $, then the largest eigenvalue $\lambda_{\max}$ of $\bH\bA\bPi$ satisfies: $\lambda_{\max} \leq {\Ncol}{\tau}^{-1}$.   
\end{theorem}
\begin{proof}
It is assumed that $\tau > 0$, so for each $s=1,\dots,N$, $\operatorname{Ker}(\tilde \bA\s) \subset \mathcal Y_L(\tau, \tilde \bA \s, \bR\s \bA  \bR\s^\top)$ and
\[
\bR\s^\top \operatorname{Ker}(\tilde \bA\s) \subset V\0.
\]
Then, according to the result in Lemma~\ref{lem:upper}, a sufficient condition for the result in the theorem is that, for any $s = 1,\dots,N$, 
\begin{equation}
\label{eq:stablocalsolverwithVsharp}
\bx \s \in \range (\tilde \bA\s^\dagger \bR\s \bPi^\top) \Rightarrow \| \KPs \bR\s^\top \bx \s \|_{\bA}^2 \leq \tau^{-1}{\Ncol} | \bx\s |_{\tilde \bA \s}^2. 
\end{equation}
Recall that $\KPs$ was defined in Lemma~\ref{lem:upper}. Let $s= 1,\dots,N$ be fixed and, for the length of the proof, let $\mathcal Y_L =  \mathcal Y_L(\tau, \tilde \bA \s, \bR\s \bA  \bR\s^\top)$ and $\mathcal Y_H =  \mathcal Y_H(\tau, \tilde \bA \s, \bR\s \bA  \bR\s^\top)$ in order to shorten notations. 

We first characterize the space $ \range (\tilde \bA\s^\dagger \bR\s \bPi^\top)$. The assumption is that $\bR\s^\top \mathcal Y_L  \subset V\0$, so  
 $\bR\s^\top \mathcal Y_L \subset  \operatorname{ker}(\bPi) = \left(\range(\bPi^\top)\right)^{\perp^{\ell_2}}$ 
which implies that
 $\mathcal Y_L \subset  \left(\range(\bR\s \bPi^\top)\right)^{\perp^{\ell_2}}$  
where the $\ell_2$-orthogonality is now in $\mathbb R^{n\s}$ instead of $\mathbb R^n$. Taking the orthogonal again and applying \eqref{eq:conjugacy} from Lemma~\ref{lem:gevp-YLYH}  yields
\[
\range(\bR\s \bPi^\top) \subset (\mathcal Y_L)^{\perp^{\ell_2}} = \bR\s \bA \bR\s^\top \mathcal Y_H =  \tilde \bA\s \mathcal Y_H. 
\] 
It then follows, by definition of $\tilde \bA \s \,^ \dagger$, that
\[
 \range(\tilde \bA\s^\dagger \bR\s \bPi^\top) \subset \tilde \bA\s^\dagger  \tilde \bA\s \mathcal Y_H  \subset \mathcal Y_H + \operatorname{Ker}(\tilde \bA \s).
\]

Now, let $\bx\s \in  \range(\tilde \bA\s^\dagger \bR\s \bPi^\top)$. It has just been proved that there exist $\by\s \in \mathcal Y_H$ and $\bz\s \in \operatorname{Ker}(\tilde \bA \s)$ such that $\bx\s = \by\s + \bz\s$, 
so $\KPs \bR\s^\top \bx\s = \KPs \bR\s^\top \by\s$. Moreover, $\KPs$ being an $\bA$-orthogonal projection, its range is the space
\[
\range (\KPs) = \left(\operatorname{Ker} (\KPs) \right)^{\perp^\bA} = \left(\bR\s^\top \operatorname{Ker}( \tilde \bA\s) \right)^{\perp^\bA} \supset \left(\bR\s^\top \mathcal Y_L\right)^{\perp^\bA}.
\]
The last inclusion follows from $\operatorname{Ker}( \tilde \bA\s) \subset \mathcal Y_L$. Another application of \eqref{eq:conjugacy} from Lemma~\ref{lem:gevp-YLYH} guarantees that ${\mathcal Y_H}^{\perp^{\ell_2}} = \bR\s \bA \bR\s^\top\mathcal Y_L$ so 
 $\bR\s^\top\mathcal Y_H \subset \left(\bR\s^\top \mathcal Y_L\right)^{\perp^\bA} \subset \range (\KPs)$. 
Consequently, $\KPs \bR\s^\top \bx\s = \bR\s^\top \by\s$ and the desired estimate can finally be proved, using the second spectral estimate from \eqref{eq:defYLYH} in Lemma~\ref{lem:upper} to get the inequality,  
\[
\| \KPs \bR\s^\top \bx \s \|_{\bA}^2 = \|\bR\s^\top \by\s \|_{\bA}^2 =\|\by\s \|_{\bR\s\bA\bR\s^\top}^2 \leq \tau^{-1} | \by\s |_{\tilde \bA \s}^2 = \tau^{-1}  | \bx \s |_{\tilde \bA \s}^2.  
\]
\end{proof}

\subsection{Proof of \eqref{eq:th1-lambdamin} for $\bH \bA\bPi$}
\begin{theorem}
\label{th:lambdaminHApi}
Let Assumptions~\ref{ass:RsVs-Astilde-Hspd} and \ref{ass:V0} hold. Let $\tau >0$. 
Under Assumption~\ref{ass:stable-split-matrix} (or \ref{ass:stable-split-matrix-withDs}), if for any $s \in \unN$,
\begin{enumerate}
\item \label{case1} either  $\tilde \bA \s$ is non-singular, and $\left(\bR\s^\top \mathcal Y_L(\tau^{-1}, \bM \s,  \tilde \bA \s )\right) \subset V\0 $,
\item \label{case2} or $\bW\s \in \mathbb R^{n\s \times \operatorname{rank}( \bM\s)}$ is a matrix whose columns form an $\ell_2$-orthonormal basis of $\range(\bM\s)$
and $\left(\bR\s^\top 
\left[ \operatorname{Ker}(\tilde \bA \s)+  \operatorname{Ker}(\bM\s) +  \bW\s \mathcal Y_H(\tau,  \bW\s^\top \tilde \bA \s \bW\s, \bW\s^\top \bM \s  \bW\s)\right]
 \right) \subset V\0 $
\end{enumerate}
then the smallest non-zero eigenvalue $\lambda_{\min}$ of $\bH\bA\bPi$ satisfies: $  (\tau \Nprime)^{-1}  <  \lambda_{\min}. 
$
\end{theorem}
\begin{proof}
The proof consists in checking that the assumptions in Lemma~\ref{lem:stabsplit} are satisfied. The fact that $\sum_{s=1}^N \bR\s^\top \operatorname{Ker}(\tilde \bA \s) \subset V\0$  is clear. It remains to prove that there exists a stable splitting of any $\bx \in \range(\bPi)$ with $C_0^2 = \tau \Nprime$.  

Letting $\bx \in \range(\bPi)$, the idea is to start with the stable splitting from Assumption~\ref{ass:stable-split-matrix} or Assumption~\ref{ass:stable-split-matrix-withDs} which splits $\bx$ as
\[
\bx = \sum_{s=1}^N \bR\s^\top \by\s \text{ satisfying } \sum_{s=1}^N \langle \by\s , \bM\s \by\s \rangle \leq \Nprime  \langle \bx, \bA \bx \rangle.
\]

If local components $\bz\s$ are found such that $\bPi \bR\s^\top \bz\s = \bPi  \bR\s^\top \by\s$ and $\langle \bz\s, \tilde \bA\s \bz\s \rangle 
\leq \tau \langle \by\s, \bM\s \by\s \rangle$, then it holds both that $ \sum_{s=1}^N \bPi \bR\s^\top \bz\s =  \bPi \sum_{s=1}^N \bR\s^\top \by\s = \bPi \bx = \bx$ and  $\sum_{s=1}^N \langle \bz\s , \tilde\bA\s \bz\s \rangle \leq \Nprime \tau  \langle \bx, \bA \bx \rangle$. In other words finding such $\bz\s$ concludes the proof.

Let $s \in \unN$. We start with the case labelled~\ref{case1} and set $\mathcal Y_L\sups = \mathcal Y_L(\tau,  \bW\s^\top \tilde \bA \s \bW\s, \bW\s^\top \bM \s  \bW\s)$ for the length of the proof. 
Let
\[
\bz\s = \bW\s \bP\s \bW\s^\top \by\s 
\]
where $\bP\s$ is the $(\bW\s^\top \bM \s  \bW\s)$-orthogonal projection onto $\mathcal Y_L\sups$. 

We first check that $ \bPi \bR\s^\top \bz\s = \bPi  \bR\s^\top \by\s$, \text{i.e.}, $\bPi \bR\s^\top \left(\matid - \bW\s \bP\s \bW\s^\top \right) \by\s = 0$. 
 We proceed as follows
\[
\range( \matid - \bW\s \bP\s \bW\s^\top ) = \left( \operatorname{Ker} ( \matid - \bW\s \bP\s^\top \bW\s^\top ) \right)^{\perp^{\ell_2}} 
 \subset \left( \bM \s  \bW\s \mathcal Y_L\sups   \right)^{\perp^{\ell_2}}, 
\]
since $\bW\s \bP\s^\top (\bW\s^\top \bM \s  \bW\s) \mathcal Y_L\sups =  \bW\s(\bW\s^\top \bM \s  \bW\s)  {\bP\s} \mathcal Y_L\sups = \bM \s  {\bW\s} \mathcal Y_L\sups $ which implies that $ (\bM \s  \bW\s \mathcal Y_L\sups)  \subset \operatorname{Ker} ( \matid - \bW\s \bP\s^\top \bW\s^\top ) $. It follows that 
\begin{align*}
\range( \matid - \bW\s \bP\s \bW\s^\top ) & = \operatorname{Ker}({\mathcal Y_L\sups}^\top  \bW\s^\top \bM \s)\\
& = \operatorname{Ker}({\bM\s}) +  \bW\s {\mathcal Y_H(\tau,  \bW\s^\top \tilde \bA \s \bW\s, \bW\s^\top \bM \s  \bW\s))}\\& \subset \operatorname{Ker}(\bPi \bR\s^\top),
\end{align*}
where in the second step, one inclusion is easy to check with \eqref{eq:conjugacy} from Lemma~\ref{lem:gevp-YLYH} and the dimensions of both spaces are equal. The stability property follows from 
\begin{align*} 
\langle \bz\s, \tilde \bA\s \bz\s \rangle & = \langle \bP\s \bW\s^\top \by\s, (\bW\s^\top \tilde \bA\s \bW\s) \bP\s \bW\s^\top \by\s\rangle \\
 & < \tau \langle \bP\s \bW\s^\top \by\s, (\bW\s^\top \bM\s \bW\s) \bP\s \bW\s^\top \by\s\rangle \text{ (by \eqref{eq:defYLYH} in Lemma~\ref{lem:gevp-YLYH})} \\
&\leq \tau \langle \bW\s^\top \by\s, (\bW\s^\top \bM\s \bW\s) \bW\s^\top \by\s\rangle \text{ ($\bP\s$ is a $(\bW\s^\top \bM\s \bW\s)$-orthogonal projection)}\\
&= \tau \langle \by\s, \bM\s \by\s\rangle \text{ since } \bM\s \by\s = \bM\s \bW\s \bW\s^\top \by\s + \bM\s \underbrace{(\matid - \bW\s \bW\s^\top) \by\s}_{\in \operatorname{Ker}(\bM\s)}.
\end{align*}

We now address the case labelled~\ref{case2}. 
 Let
\[
\bz\s = {\bP\s}' \by\s, 
\]
where ${\bP\s}'$ is the $\tilde \bA \s$-orthogonal projection onto $\mathcal Y_H(\tau^{-1},  \bM \s , \tilde \bA \s  ) $. 
We first check that $ \bPi \bR\s^\top \bz\s = \bPi  \bR\s^\top \by\s$, \text{i.e.},
$\bPi \bR\s^\top \left(\matid - {\bP\s}' \right) \by\s = \mathbf 0$. This is indeed the case since $\bPi \bR\s^\top \mathcal Y_L({\tau}^{-1},  \bM \s , \tilde \bA \s  ) = \mathbf 0 $ and
\[
\range( \matid - {\bP\s}' )  = \operatorname{Ker} ( {\bP\s}' ) = \left(\mathcal Y_H({\tau}^{-1},  \bM \s , \tilde \bA \s  )\right)^{\perp^{\tilde\bA\s}}    =\mathcal Y_L({\tau}^{-1},  \bM \s , \tilde \bA \s  ),
\] 
by \eqref{eq:conjugacy} in Lemma~\ref{lem:gevp-YLYH}.  

The stability property follows from  
\begin{align*} 
 \langle \bz\s, \tilde \bA\s \bz\s \rangle & = \langle {\bP\s}' \by\s, \tilde \bA\s {\bP\s}' \by\s\rangle \\
 & \leq \tau  \langle {\bP\s}' \by\s, \bM\s {\bP\s}' \by\s\rangle \text{ (by \eqref{eq:defYLYH} in Lemma~\ref{lem:gevp-YLYH})} \\
 & \leq \tau  \left[\langle {\bP\s}' \by\s, \bM\s {\bP\s}' \by\s\rangle + \langle (\matid - {\bP\s}') \by\s, \bM\s (\matid- {\bP\s}') \by\s\rangle\right]\\
&  \leq \tau  \langle \by\s, \bM\s \by\s\rangle \text{ (by \eqref{eq:conjugacy} in Lemma~\ref{lem:gevp-YLYH})}.
\end{align*}
\end{proof}

\subsection{Proof of \eqref{eq:th2-hard}} 

\textcolor{black}{Spectral results for the two-level additive preconditioner $\bH_{ad}$ without the assumption that the local solvers are for $\bR\s\bA\bR\s^\top$ are a novelty. There is an additional assumption in the form of an additional coarse space.} 

\begin{theorem}[Spectral bound for  $\bH_{hyb} \bA$ $\Rightarrow$ Spectral bound for $\bH_{ad} \bA$]
\label{th:add}
Let Assumptions~\ref{ass:RsVs-Astilde-Hspd}, \ref{ass:V0}, and \ref{ass:stable-split-matrix} (or  \ref{ass:stable-split-matrix-withDs}) hold. Moreover, assume that the matrices $\tilde \bA \s$ are non-singular. If for any $s \in \unN$,  $\left(\bR\s^\top \mathcal Y_L(\upsilon, \tilde \bA \s, \bR\s \bA  \bR\s^\top)\right)  \subset V\0 $ and $0 < \lambda_{\min,hyb} \leq \lambda(\bH_{hyb}\bA)$ is a lower bound for the eigenvalues of the hybrid preconditioned operator then  
\[
 \left[\max\left(2, 1 + 2 {\Ncol}{\upsilon}^{-1}\right)   \lambda_{\min,hyb}^{-1}  \right]^{-1}  \leq \lambda(\bH_{ad} \bA). 
\] 
\end{theorem}
\begin{proof}
The proof comes down to applying Lemma~\ref{lem:stabsplit} (stable splitting). Indeed, the two-level Additive preconditioner fits the abstract framework by considering that there are $N+1$ subspaces ($ \range(\bR\s^\top)$ for $s=0, \dots, N$) that play the same role. In other words, the coarse space $V\0$ is viewed just like any of the other subspaces with the local solver $\tilde\bA \0 = {\bR\0 \bA \bR\0}^\top $ and the interpolation operator $\bR\0^\top$. There is no coarse space that is treated by projection so the projection operator in Lemma~\ref{lem:stabsplit} equals identity. Let $\bx \in \mathbb R^n$, it suffices to prove that there exist $\bz\s \in \mathbb R^{n\s}$ for any $s=0,\dots,N$ such that
\begin{equation}
\label{eq:whatwewant}
\bx =\sum_{s=0}^N \bR\s^\top \bz\s  \text{ and } \sum_{s=1}^N \| \bz\s \|_{\tilde \bA \s}^2 + \|\bz\0\|_{\tilde \bA \0}^2 \leq 
C_0^2 \|\bx \|_\bA^2; \quad C_0^2 =\max\left(2, 1 + 2 {\Ncol}{\upsilon}^{-1}\right)   \lambda_{\min,hyb}^{-1}. 
\end{equation}
Inspired by \cite{ToselliWidlund_book2005}[Lemma 2.5], the proof starts with $\langle \bx, \bH_{hyb}^{-1} \bx \rangle \leq \lambda_{\min,hyb}^{-1} \langle \bx, \bA \bx \rangle$ and follows with 
\begin{align*}
\langle \bx, \bH_{hyb}^{-1} \bx \rangle &= \langle \bH_{hyb}^{-1} \bx, \bH_{hyb} \bH_{hyb}^{-1} \bx \rangle \\
&= \sum_{s=1}^N \langle \bH_{hyb}^{-1} \bx, \bPi \bR\s^\top \tilde\bA\s^{-1} \bR\s \bPi^\top \bH_{hyb}^{-1} \bx \rangle + \langle \bH_{hyb}^{-1} \bx, \bR\0^\top \tilde\bA\0^{-1} \bR\0 \bH_{hyb}^{-1} \bx \rangle\\
&= \sum_{s=1}^N \langle \bR\s \bPi^\top \bH_{hyb}^{-1} \bx, \tilde\bA\s^{-1} \tilde \bA\s \tilde\bA\s^{-1} \bR\s \bPi^\top \bH_{hyb}^{-1} \bx \rangle + \langle \bH_{hyb}^{-1} \bx, \bR\0^\top \tilde\bA\0^{-1} {\tilde\bA\0} \tilde\bA\0^{-1} \bR\0 \bH_{hyb}^{-1} \bx \rangle\\ 
& = \sum_{s=1}^N \|\bz\s\|_{\tilde \bA \s}^2 + \| {\bz\0}'\|_{\tilde\bA \0}^2,
\end{align*}
with $\bz\s := \tilde\bA\s^{-1} \bR\s \bPi^\top \bH_{hyb}^{-1} \bx$ for $s=1, \dots,N$, and ${\bz\0}' := \tilde\bA\0^{-1} \bR\0 \bH_{hyb}^{-1} \bx  $. This first splitting of $\bx$ satisfies  $ \bx = \sum_{s=1}^N \bPi \bR\s^\top \bz\s + \bR\0^\top {\bz\0}'$ and 
\begin{equation}
\label{eq:almost-splitting}
\sum_{s=1}^N \|\bz\s\|_{\tilde \bA \s}^2 + \| {\bz\0}'\|_{\tilde\bA \0}^2 \leq \lambda_{\min,hyb}^{-1} \langle \bx, \bA \bx \rangle.
\end{equation}
The only problem is the presence of $\bPi$. Instead, the splitting of $\bx$ is rewritten to suit the fully additive setting, \text{i.e.}, so that it satisfies \eqref{eq:whatwewant}:
\[
\bx = \sum_{s=1}^N \bR\s^\top \bz\s - (\matid - \bPi) \sum_{s=1}^N \bR\s^\top \bz\s +\bR\0^\top  {\bz\0}' =  \sum_{s=1}^N \bR\s^\top \bz\s + \bR\0^\top \underbrace{\left[- (\bR\0 \bA \bR\0^\top)^{-1} \bR\0 \bA  \sum_{s=1}^N \bR\s^\top \bz\s + {\bz\0}'  \right]}_{{:=\bz\0 \, \in V\0}}. 
\]
It remains to prove that the splitting satisfies~\eqref{eq:whatwewant}. 
To this end we calculate
\begin{align*}
\|\bz\0\|_{\tilde\bA\0}^2 &\leq 2 \|{\bz\0}'\|_{\tilde \bA \0}^2 +  2 \| (-\bR\0 \bA \bR\0^\top)^{-1} \bR\0 \bA  \sum_{s=1}^N \bR\s^\top \bz\s    \|_{\tilde \bA\0}^2 \\
&=  2 \|{\bz\0}'\|_{\tilde \bA\0}^2 +  2 \langle  \sum_{s=1}^N \bR\s^\top \bz\s , \bA \bR\0^\top(\bR\0 \bA \bR\0^\top)^{-1} \bR\0 \bA  \sum_{s=1}^N \bR\s^\top \bz\s \rangle \\ 
&=  2 \|{\bz\0}'\|_{\tilde \bA\0}^2 +  2 \| (\matid - \bPi)  \sum_{s=1}^N \bR\s^\top \bz\s \|_\bA^2 \\ 
&=  2 \|{\bz\0}'\|_{\tilde \bA\0}^2 +  2 \| (\matid - \bPi)  \bH \bPi^\top \bH_{hyb}^{-1} \bx \|_\bA^2 \\
&\leq  2 \|{\bz\0}'\|_{\tilde \bA\0}^2 +  2 \| \bH \bPi^\top \bH_{hyb}^{-1} \bx \|_\bA^2 \\
& \leq  2 \|{\bz\0}'\|_{\tilde \bA\0}^2 +  2 \Ncol \sum_{s=1}^N \| \bR\s^\top \tilde\bA\s^{-1} \bR\s \bPi^\top \bH_{hyb}^{-1} \bx \|_\bA^2 \text{ (Cauchy-Schwarz with Definition~\ref{def:color} of $\mathcal  N$)}  \\
& \leq  2 \|{\bz\0}'\|_{\tilde \bA\0}^2 +  2 \Ncol \sum_{s=1}^N \| \tilde\bA\s^{-1}  \bR\s \bPi^\top \bH_{hyb}^{-1} \bx \|_{\bR\s \bA\bR\s^\top}^2   \\
& \leq   2 \|{\bz\0}'\|_{\tilde \bA\0}^2 +  2 {\Ncol}\upsilon^{-1} \sum_{s=1}^N \| \tilde\bA\s^{-1}  \bR\s \bPi^\top \bH_{hyb}^{-1} \bx \|_{\tilde\bA\s}^2 \text{(\eqref{eq:stablocalsolverwithVsharp} in the proof of Theorem~\ref{th:lambdamaxHApi} with $\KPs = \matid$)}  \\
& =  2 \|{\bz\0}'\|_{\tilde \bA \0}^2 +  2 {\Ncol}{\upsilon}^{-1} \sum_{s=1}^N \| \bz\s\|_{\tilde\bA\s}^2 . 
 \end{align*}

Finally, by putting this together with \eqref{eq:almost-splitting}, it follows that 
\begin{align*}
\sum_{s=1}^N \|\bz\s\|_{\tilde \bA \s}^2 + \| {\bz\0}\|_{\tilde \bA\0}^2 &\leq  \sum_{s=1}^N \|\bz\s\|_{\tilde \bA \s}^2 +  2 \|{\bz\0}'\|_{\tilde \bA\0}^2 +  2 {\Ncol}{\upsilon}^{-1} \sum_{s=1}^N \| \bz\s\|_{\tilde\bA\s}^2  \\ 
&= \left( 1 + 2 {\Ncol}\upsilon^{-1}\right)  \sum_{s=1}^N \|\bz\s\|_{\tilde \bA \s}^2 +  2 \|{\bz\0}'\|_{\tilde \bA\0}^2   \\ 
&\leq \max\left(2, 1 + 2 {\Ncol}{\upsilon}^{-1}\right)  \lambda_{\min,hyb} ^{-1} \langle \bx, \bA \bx \rangle. 
\end{align*}
\end{proof}

\section{Example: 2d linear elasticity with Additive Schwarz,\\ Neumann-Neumann and Inexact Schwarz}
\label{sec:examples}

In this Section, the abstract framework is made concrete. Its setup, analysis and numerical performance are described for a two-dimensional linear elasticity problem.

\subsection{Geometry and PDE}
\label{sub:elast}
Let $\Omega = [0 , 2] \times [0,1]  \subset \mathbb R^2$  be the computational domain. Let $\partial \Omega_D$ be the left hand side boundary of $\Omega$ and let $\mathcal V =  \{\bv \in H^1(\Omega)^2; \bv= \mathbf 0 \text{ on } \partial\Omega_D\}$. The linear elasticity equations posed in $\Omega$  with mixed boundary conditions are considered. A solution $\bu  \in \mathcal V$ is sought such that
\begin{equation}
\label{eq:elasticity}
\int_{\Omega} 2 \mu \varepsilon(\bu) : \varepsilon(\bv) \, dx +  \int_{\Omega} L \operatorname{div}(\bu) \operatorname{div}(\bv) \, dx  = \int_{\Omega} \mathbf{g} \cdot \bv \, dx, \text{ for all } \bv \in \mathcal V,
\end{equation}
where, for $i,j=1,2$, $\varepsilon_{ij}(\bu) = \frac{1}{2}\left(\frac{\partial {u}_i}{\partial x_j}+\frac{\partial {u}_j}{\partial x_i}\right)$, $\mathbf g = (0,1)^\top$ and the Lam\'e coefficients are functions of Young's modulus $E$ and Poisson's ratio $\nu$ : $  
\mu = \frac{E}{2(1+\nu)},\, L = \frac{E\nu}{(1+\nu)(1-2\nu)}$. 

\subsection{Discretization}

The computational domain is discretized by a uniform mesh with element size $h = 1/42$ and the boundary value problem is solved numerically with standard piecewise linear ($\mathbb P_1$) Lagrange finite elements. Let $\mathcal V_h$ be the space of $\mathbb P_1$ finite elements that satisfy the Dirichlet boundary condition. Let $\{\bphi_k\}_{k=1}^n$ be a basis of $\mathcal V_h$. The linear system that is to be solved is
\[
\text{Find } \bx \in \mathbb R^n \text{ such that } \bA \bx = \bb,
\] 
with $A_{ij} = \int_{\Omega} \left[ 2 \mu \varepsilon(\bphi_i) : \varepsilon(\bphi_j) +  L \operatorname{div}(\bphi_i) \operatorname{div}(\bphi_j) \right] \, dx $ and $\bb_i = \int_{\Omega} \mathbf{g} \cdot \bphi_i \, dx$.
The dimension of the global problem is $n = 43 \times 84 \times 2 = 7224$ where it has been taken into account that there are two degrees of freedom at each grid point (the $\bx$ and $\by$ displacements) and that there are no degrees of freedom where a Dirichlet boundary condition has been prescribed.

\subsection{Domain Decomposition}
\label{sub:DD}

The computational domain $\Omega$ is partitioned into $N = 8$ non-overlapping subdomains with Metis \cite{METIS} (see Figure~\ref{fig:geom}--left). The geometric subdomains are assumed to be mesh-conforming and they are denoted $\Omega\s$ for $s\in \llbracket 1, N \rrbracket$. Let $V\s = \{S^1\s, \dots, S^{n\s}\s\}$ be the set of mesh nodes that are in each $\Omega\s$ (for $s \in \llbracket 1, N \rrbracket$). These are also the local degrees of freedom. The restriction matrices $\bR\s \in \mathbb R^{n\s \times n}$ are defined by 
\[
(\bR\s)_{ij} = 1 \text{ if } j = S^i\s \text{ and } (\bR\s)_{ij} = 0 \text{ otherwise}. 
\]
Each $\bR\s$ has exactly one $1$ per row. These are in agreement with Assumption~\ref{ass:RsVs-Astilde-Hspd}. By construction, the degrees of freedom that are on the interfaces between subdomains are duplicated (and only these ones). 

We may also assemble the matrices that correspond to the discretization of the problem $\eqref{eq:elasticity}$ restricted to each subdomain $s \in \llbracket 1, N \rrbracket$: $\bANeu \in \mathbb R^{n\s \times n\s}$  such that  
\begin{equation}
\label{eq:def-Aneus}
(  \bANeu )_{ij} := \int_{\Omega\s} \left[ 2 \mu \varepsilon(\bphi_{S^i\s}) : \varepsilon(\bphi_{S^j\s})  +   L \operatorname{div}(\bphi_{S^i\s}) \operatorname{div}(\bphi_{S^j\s}) \, dx \right] \text{ for all } i,j.
\end{equation}

These are frequently referred to as the local Neumann matrices as they arise from assembling the original problem over the subdomain $\Omega\s$ with natural boundary conditions. They can't be computed from the global matrix $\bA$. Since we consider non-overlapping subdomains they satisfy the very useful property that
\[
\bA = \sum_{s=1}^N \bR\s^\top \bANeu \bR\s.
\]

It is chosen to fulfill Assumption~\ref{ass:stable-split-matrix} through the use of a partition of unity as proposed in Assumption~\ref{ass:stable-split-matrix-withDs}. Let the partition of unity be 
\begin{itemize}
\item either the multiplicity scaling, also called the $\mu$-scaling
\begin{equation}
\label{eq:muscaling}
\bD\s :=  \bR \s \left(\sum_{t=1}^N \bR \st^\top  \bR \st \right)^{-1} \bR\s^\top, \, \forall s \in \unN, 
\end{equation}
\item or the $k$-scaling
\begin{equation}
\label{eq:kscaling}
\bD\s \in \mathbb R^{ n\s \times  n\s} \text{ diagonal with entries } (\bD\s)_{ii} := \frac{(\bANeu)_{ii}}{(\bR\s \bA \bR\s^\top)_{ii}}, \, \forall i \in \llbracket 1, n\s \rrbracket, \, \forall s \in \unN. 
\end{equation}
\end{itemize}
Assumption~\ref{ass:stable-split-matrix} is fulfilled with $\Nprime = 1$ by setting 
\begin{equation}
\label{eq:Ms}
\bM\s := \bD\s^{-1} \bANeu \bD\s^{-1}; \text{ and } \by\s  := \bD\s \bx, \, \forall \bx \in \mathbb R^n. 
\end{equation}

These matrices are typically singular and their kernel is the set of rigid body modes on the subdomain weighted by the partition of unity. 

\subsection{Choice of parameters in the PDE}
\begin{figure}
\begin{center}
\begin{tabular}{ccc}
\includegraphics[width=0.23\textwidth]{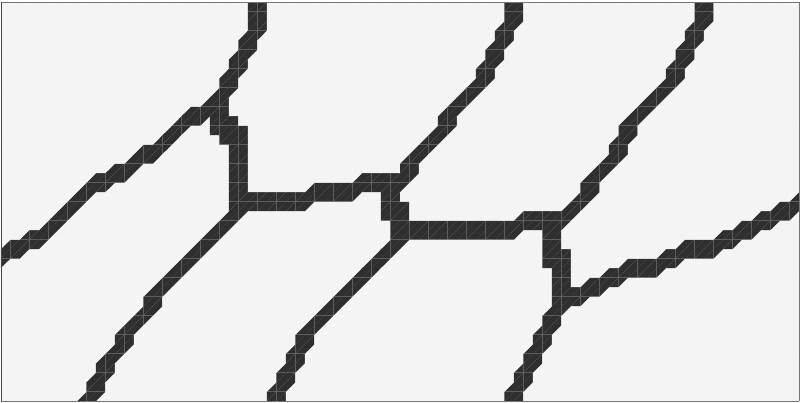}&
\includegraphics[width=0.23\textwidth]{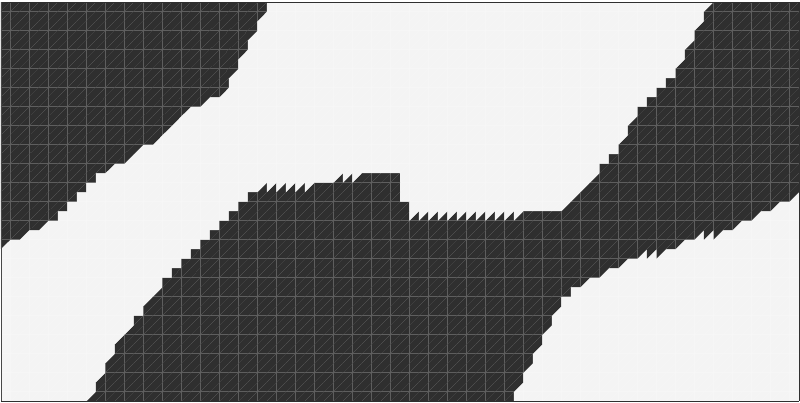}&
\includegraphics[width=0.23\textwidth]{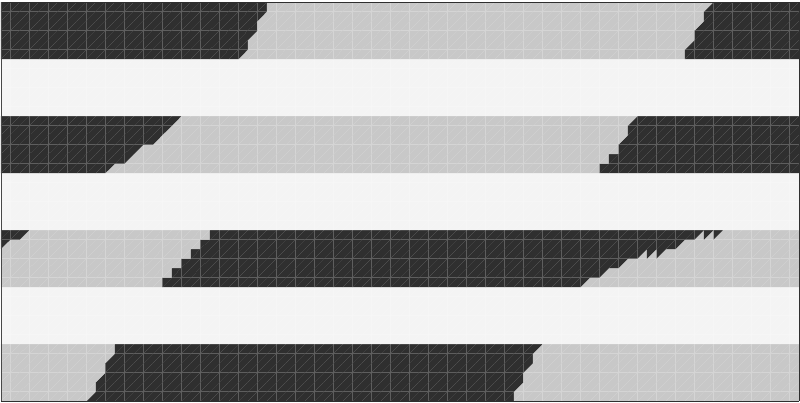}\\
$N=8$ subdomains & $E = 10^5$ (dark) or $10^8$ (light) & $E \mathrel{{+}{=}} 10^9$ in white stripes  \\
& `no layers' & `with layers'  
\end{tabular}
\end{center}
\caption{Partition into subdomains, distribution of E without and with harder layers.}
\label{fig:geom}
\end{figure}
We set Poisson's ratio to $\nu = 0.4$ in all of the domain for all test cases. Two distributions of Young's modulus are considered. In the first $E$ is constant per subdomain: $E = 10^5$ if $s$ is odd and $E = 10^8$ if $s$ is even. The second distribution of $E$ is obtained by adding some rigid layers to the first one: $E$ is augmented by $10^{9}$ if $y \in [1/7, 2/7] \cup [3/7, 4/7] \cup [5/7, 6/7]$. These test cases are referred to as `no layers' and `with layers' and the coefficient distributions are plotted in Figure~\ref{fig:geom}. It is well known (see, \textit{e.g.}, \cite{pechstein2011analysis}) that the solution of \eqref{eq:elasticity} in a heterogeneous medium is challenging.

\subsection{Domain Decomposition Preconditioners}
\textcolor{black}{The definitions and spectral results for the three considered domain decomposition methods are summarized in Table~\ref{tab:abstract-app} and presented in detail next.}
\begin{table}
\caption{\textcolor{black}{Three domain decomposition preconditioners, their GenEO coarse spaces and spectral bounds (valid for non-overlapping subdomains) from Theorems~\ref{th:Ad}, \ref{th:NN}, and~\ref{th:IS}. Recall that for any matrix $\bB$, the notation $\lambda(\bB)$ refers to any of its eigenvalues. For the projected operator $\bH \bA \bPi$, the lower bounds are to be understood for any non-zero eigenvalue.}}
\label{tab:abstract-app}
\centering
\scalebox{0.8}{
  \SetTblrInner{rowsep=5pt}
\begin{tblr}{hline{1-2,Z} = {2pt}, 
  hline{3-Y} = {1pt},
  vline{1-Z} = {1-2}{solid},
  vline{1-Z} = {1pt},
  colspec={cccc}}
DD Preconditioner & Local problem $\tilde \bA \s$ & \makecell{Local contributions to\\ GenEO coarse space} & Spectral bounds for two-level prec. \\
\makecell{\textbf{AS} \\
Additive Schwarz} & $ \bR\s \bA \bR\s^\top$   &   \makecell{$\mathcal Y_L(\tau^{-1}, \bD\s^{-1} \bANeu \bD\s^{-1} ,  \bR\s \bA \bR \s^{\top})$\\\smallskip \textit{i.e.} low-frequency vectors of \\ $ \displaystyle \bD\s^{-1} \bANeu \bD\s^{-1} \bx\s = \lambda\s\,  \bR\s \bA \bR\s^\top \bx\s$   } &  \makecell{
$\begin{aligned}
&  \lambda(\bH \bA ) &\leq \Ncol \\
1/\tau   \leq &\lambda(\bH \bA \bPi) &\leq \Ncol \\ 
 1/\tau \leq & \lambda(\bH_{hyb} \bA)  &\leq \Ncol \\ 
((1 + 2 \Ncol) \tau))^{-1} \leq & \lambda(\bH_{ad} \bA )  &\leq \Ncol + 1 
\end{aligned}$
}\\
\makecell{\textbf{NN} \\
Neumann-Neumann} & $\bD\s^{-1} \bANeu \bD\s^{-1}$  &  Same as above &  \makecell{
$\begin{aligned}
1 \leq &\lambda(\bH \bA \bPi) &\leq \Ncol \tau\\ 
 1 \leq & \lambda(\bH_{hyb} \bA ) &\leq \Ncol \tau 
\end{aligned}$
}\\
\makecell{\textbf{IS} \\
Inexact Schwarz} &
\makecell{
$ \bL\s \bL\s^\top \approx \bR\s \bA \bR\s^\top $ \\ (no-fill \\ incomplete \\ Cholesky fact.)}
  &
\makecell{ $ \mathcal Y_L(\upsilon, \bL\s \bL\s^\top , \bR\s \bA  \bR\s^\top) +$ \\ $\mathcal Y_L(\tau^{-1}, \bD\s^{-1} \bANeu \bD\s^{-1},    \bL\s \bL\s^\top)$ \\
\smallskip \textit{i.e.} low-frequency vectors of \\ $ \displaystyle \bL\s \bL\s^\top \bx\s = \lambda\s\, \bR\s \bA  \bR\s^\top \bx\s$ \\ and of \\  $ \displaystyle  \bD\s^{-1} \bANeu \bD\s^{-1}  \bx\s = \lambda\s\,  \bL\s \bL\s^\top  \bx\s $ 
}
   &  \makecell{
$\begin{aligned}
1/\tau \leq &\lambda(\bH \bA \bPi) &\leq \Ncol/ \upsilon\\
1/\tau \leq & \lambda(\bH_{hyb} \bA ) &\leq \Ncol/ \upsilon\\ 
((1 + 2 \Ncol/\upsilon)\tau)^{-1} \leq  & \lambda(\bH_{ad} \bA )  &\leq  \Ncol/C_\sharp + 1 \\ 
\end{aligned}$
}\\
\end{tblr}
}

\bigskip
{\small
Summary of Notation:
\begin{itemize}
\item $\bA$: problem matrix in the linear system $\bA \bx = \bb$,
\item $ \bR\s \bA  \bR\s^\top$: restriction of the problem to subdomain number $s$ where $\bR\s$ is the restriction to the subdomain (see Assumption~\ref{ass:RsVs-Astilde-Hspd}), 
\item $ \bANeu$: local Neumann matrix (defined in~\eqref{eq:def-Aneus}),
\item $ \bD\s$: partition of unity (either defined by~\eqref{eq:muscaling} or by~\eqref{eq:kscaling}),
\item $\bL\s$: factor in the no-fill incomplete Cholesky factorization  $\bL\s \bL\s^\top \approx \bR\s \bA  \bR\s^\top$ \cite{chan1997approximate} (triangular matrix),
\item $\bPi$: coarse projector (Definition~\ref{def:precs}), 
\item $\tilde \bA \s$: local solvers in the definitions of the one-level preconditioner $\bH = \sum_{s=1}^N \bR\s^\top \tilde \bA\s^\dagger \bR\s$ (Definition~\ref{def:precs}),
\item $\bH_{hyb} = \bPi \bH \bPi^\top + \bR\0^\top (\bR\0 \bA \bR\0^\top)^{-1} \bR\0 $: two-level hybrid preconditioner (Definition~\ref{def:precs}),
\item $\bH_{ad} =  \bH + \bR\0^\top (\bR\0 \bA \bR\0^\top)^{-1} \bR\0$: two-level additive preconditioner (Definition~\ref{def:precs}),
\item $\Ncol$: coloring constant (Definition~\ref{def:color}),
\item $\tau > 1$, $\upsilon \in ]0; \Ncol[ $: thresholds chosen by the user,
\item $\mathcal Y_L(\tau, \bM_\bA, \bM_\bB) = \operatorname{span}\{\by; \, \bM_\bA \by = \lambda \bM_\bB \by \text{ with } \lambda < \tau \} $: span of low-frequency eigenvectors  (Definition~\ref{def:YLYH}),
\item $C_\sharp$ : constant defined in Definition~\ref{def:Csharp}. 
\item $\Nprime$ and $\bM\s$ from Assumption~\ref{ass:stable-split-matrix} (or \ref{ass:stable-split-matrix-withDs})),
\end{itemize}
}
\end{table}

\subsubsection{Additive Schwarz preconditioner}

\begin{theorem}[Additive Schwarz preconditioner]
\label{th:Ad}
Let the Additive Schwarz preconditioner be defined by the choice ${\tilde \bA \s} = \bR\s \bA \bR\s^\top$ for every $s \in \llbracket 1, N \rrbracket$, which leads to the one-level preconditioner 
\begin{equation}
\label{def:HAS}
\bH := \sum_{s=1}^N \bR\s^\top(\bR\s \bA \bR\s^\top)^{-1}\bR\s. 
\end{equation}
 
With $\bM\s$ from \eqref{eq:Ms}, and given any threshold $\tau > 1$, let the coarse space be defined 
\begin{itemize}
\item either as  
$V\0 := \sum_{s=1}^N \bR\s^\top \mathcal Y_L(\tau^{-1},  \bM \s ,  \bR\s \bA \bR\s^\top)$,
\item or as $V\0 := \sum_{s=1}^N \bR\s^\top \operatorname{Ker}(\bM\s) + \sum_{s=1}^N \bR\s^\top \bW\s \mathcal Y_H(\tau,  \bW\s^\top \bR\s \bA \bR\s^\top \bW\s, \bW\s^\top \bM \s  \bW\s)$
where the columns of $\bW\s$ form an $\ell_2$-orthonormal basis of $\range(\bM\s)$.
\end{itemize}
 
Under Assumption~\ref{ass:V0}, the two-level operators that result from applying Definition~\ref{def:precs} satisfy 
\begin{eqnarray}
&  \lambda(\bH \bA ) & \leq \Ncol \\
1/\tau   \leq &\lambda(\bH \bA \bPi) &\leq \Ncol  \text{ if } \lambda(\bH \bA \bPi) \neq 0\\ \ 
 1/\tau \leq & \lambda(\bH_{hyb} \bA)  &\leq \Ncol \\ 
1/((1 + 2 \Ncol) \tau)) \leq & \lambda(\bH_{ad} \bA )  &\leq \Ncol + 1 ,
\end{eqnarray}
where $\Ncol$ is the coloring constant from Definition~\ref{def:color}.
\end{theorem}
\begin{proof}
Apply Theorems~\ref{th:main} and~\ref{th:mainadd} (with $C_\sharp = 1$, $\Nprime =1$, $\Ncol \geq 1$). 
 The local matrices $\tilde \bA \s$ in Theorem~\ref{th:Ad} are spd as a result of $\bA$ being spd and all $\bR\s^\top$ being full rank. 
\end{proof}

\begin{remark}[Computation of $V\0$ in Theorem~\ref{th:Ad}] 
\label{rem:computVO}
The coarse space $V\0$ is formed by contributions coming from each subdomain $s \in \llbracket 1, N \rrbracket$. They can be computed as follows. First, a Cholesky factorization with pivoting of the matrix $\bM\s$ is performed. This gives both a factorization of $\bM\s$ and an orthonormal basis $\bZ\s$ for the kernel of $\bM\s$. Then, an eigenvalue problem is solved partially to compute the highest or lowest frequency vectors. In both cases, the factorization of $\bM\s$ is crucial since an application of $\bM\s^{\dagger}$ is necessary at each iteration. Although the first coarse space seems simpler than the second because its presentation is more compact, it does not vary much in actual computation work.  The eigensolver SLEPc \cite{Hernandez:2005:SSF} provides an option for a deflation space. Setting it to $\operatorname{Ker}(\bM\s)$ allows to solve the eigenvalue problem for the second coarse space (with $\bW\s$).
\end{remark}

\begin{remark}[Choice of $\tau$]
\label{rem:tauflatl1}
As $\tau$ decreases, the condition number of the preconditioned operators decreases. But the number of vectors in the coarse space becomes larger. It is not advised to choose $\tau < 1$ as this would lead to a very large coarse space. Indeed, by definition the matrices $\bR\s \bA \bR\s^\top$ and $ \bM\s $ differ only at the interfaces between subdomains so eigenvalue $1$ in the generalized eigenvalue problems is associated with a very large eigenspace. Indeed, it contains all the vectors that are $0$ on the boundary of $\Omega\s$.
\end{remark}

\begin{remark}\label{rem:discharmev} The coarse vectors are $\bR\s \bA \bR\s^\top$-discrete harmonic inside the subdomains. This remark follows from the previous one: all eigenvectors that correspond to an eigenvalue other than $1$ are $\bR\s \bA \bR\s^\top$-orthogonal to all vectors that are supported in the interior of a subdomain. 
\end{remark}

\subsubsection{Neumann-Neumann preconditioner}
\begin{theorem}[Neumann-Neumann preconditioner]
\label{th:NN}
Let the Neumann-Neumann preconditioner be defined by the choice ${\tilde \bA \s} = \bM\s$ with $\bM\s$ from \eqref{eq:Ms} (for every $s \in \llbracket 1, N \rrbracket$), which leads to the one-level preconditioner 
\begin{equation}
\label{def:HNN}
\bH := \sum_{s=1}^N \bR\s^\top\bD\s \bANeu^{\dagger} {\bD \s }\bR\s. 
\end{equation}
 
Given any threshold $1 > \tau > 0$, let the coarse space be defined by 
\[
V\0 := \sum_{s=1}^N \bR\s^\top \mathcal Y_L(\tau,  \bM \s, \bR\s \bA  \bR\s^\top). 
\]
 
Under Assumption~\ref{ass:V0}, the two-level operators that result from applying Definition~\ref{def:precs} satisfy 
\begin{eqnarray} 
1 \leq &\lambda(\bH \bA \bPi) &\leq \Ncol/ \tau \text{ if } \lambda(\bH\bA \bPi) \neq 0\\  
 1 \leq & \lambda(\bH_{hyb} \bA ) &\leq \Ncol/ \tau. 
\end{eqnarray}
where $\Ncol$ is the coloring constant from Definition~\ref{def:color}.
\end{theorem}
Note that, $V\0$ is defined only for $\tau > 0$ so $\sum_{s=1}^N \bR\s^\top \operatorname{Ker} (\bM \s) \subset V\0$.  
\begin{proof}
For the upper bounds, apply Theorem~\ref{th:main}. For the lower bound of $\bH \bA \bPi$, apply Lemma~\ref{lem:stabsplit} with $C_0^2 = 1$ (thanks to $\tilde \bA\s = \bM\s$ and Assumption~\ref{ass:stable-split-matrix} with $\Nprime =1$). The lower bound for $\bH_{hyb} \bA$ follows from Theorem~\ref{th:projhyb}. 
\end{proof}

The choice of partition of unity enters into the one-level preconditioner as well as the coarse space. The local matrices $\tilde \bA \s$ in Neumann-Neumann are singular unless there is a Dirichlet boundary condition for the subdomain numbered $s$.  For the two-dimensional linear elasticity problem, the kernel of $ {\bANeu}$ is the set of rigid body modes. 

A remarkable feature is that the coarse space for Neumann-Neumann is the same as one of the coarse spaces for Additive Schwarz in~Theorem~\ref{th:Ad}: there is a set of coarse vectors that \textit{fixes} both the Neumann-Neumann preconditioners and the Additive Schwarz preconditioners. This was already pointed out in \cite{agullo2019robust}.

The additive version of the Neumann-Neumann preconditioner is not considered because no results can be proved 
(and numerical performance is poor). There is no interesting result for the spectrum of the operator without a coarse space either. The closest thing to that is that $\lambda(\bH \bA \bPi) \geq 1$ with a coarse space consisting only of the kernels of the local solvers.

We refer the reader to the remarks in the previous paragraph which also apply here. In particular, the computation of $V\0$ is similar (or exactly the same), $\tau$ should be chosen to be less than $1$, and the eigenvectors that enter into the coarse space are $\bR\s \bA \bR\s^\top$-harmonic in the interior of the subdomain.

\subsubsection{Inexact Schwarz preconditioner} 
\textcolor{black}{Inexact Schwarz methods are an important family of domain decomposition preconditioners and this is, to the best of the author's knowledge, the first introduction of GenEO coarse spaces for Inexact Schwarz with the incomplete Cholesky factorization.} 

\begin{theorem}[Inexact Schwarz preconditioner]
\label{th:IS}
Let the Inexact Schwarz preconditioner be defined by the choice (for every $s \in \llbracket 1, N \rrbracket$)  
$
\tilde \bA \s = \bL\s \bL\s^\top,
$
where the triangular matrix $\bL\s$ is the factor in the no-fill incomplete \textcolor{black}{Cholesky factorization of $\bR\s\bA{\bR\s}^{\top}$ \cite{chan1997approximate}.} 
This leads to the one-level preconditioner 
\begin{equation}
\label{def:HIS}
\bH := \sum_{s=1}^N \bR\s^\top (\bL\s\bL\s^\top)^{-1}  \bR\s. 
\end{equation}
 
With $\bM\s$ from \eqref{eq:Ms}, and given any two thresholds $\tau > 1$ and $\upsilon \in ]0, \Ncol[$, let the coarse space be defined 
\begin{itemize}
\item either as  
$V\0 := \sum_{s=1}^N  \bR\s^\top \left[ \mathcal Y_L(\upsilon, \bL\s \bL\s^\top , \bR\s \bA  \bR\s^\top) + \mathcal Y_L(\tau^{-1}, \bM \s ,   \bL\s \bL\s^\top) \right]$, 
\item or as $V\0 := \sum_{s=1}^N  \bR\s^\top \left[\operatorname{Ker} (\bM \s) +  \mathcal Y_L(\upsilon, \bL\s \bL\s^\top , \bR\s \bA  \bR\s^\top) + \bW\s \mathcal Y_H(\tau,  \bW\s^\top  \bL\s \bL\s^\top \bW\s, \bW\s^\top \bM \s  \bW\s) \right]$, 
where the columns of $\bW\s$ form an $\ell_2$-orthonormal basis of $\range(\bM\s)$.
\end{itemize}
 
Under Assumption~\ref{ass:V0}, the two-level operators that result from applying Definition~\ref{def:precs} satisfy 
\begin{eqnarray}
1/\tau \leq &\lambda(\bH \bA \bPi) &\leq \Ncol/ \upsilon \text{ if } \lambda(\bH \bA \bPi) \neq 0\\ 
1/\tau \leq & \lambda(\bH_{hyb} \bA ) &\leq \Ncol/ \upsilon \\
\color{black}{((1 + 2 \Ncol/\upsilon)\tau )^{-1} \leq}  & \textcolor{black}{\lambda(\bH_{ad} \bA )}  & \textcolor{black}{\leq  {\Ncol}/{C_\sharp} + 1},
\end{eqnarray}
where $\Ncol$ is the coloring constant from Definition~\ref{def:color} \textcolor{black}{and $C_\sharp$ is as in Definition~\ref{def:Csharp}}.
\end{theorem}
\begin{proof}
Apply Theorems~\ref{th:main} and~\ref{th:mainadd} with $\Nprime = 1$.
\end{proof}

\textcolor{black}{The upper bound for $\lambda(\bH_{ad} \bA )$ involves $C_\sharp$ from Definition~\ref{def:Csharp}, which can be set to be the smallest eigenvalue of $\bL\s \bL\s^\top \bx \s =  \lambda\s \bR\s \bA \bR\s^\top \bx\s $. Although $C_\sharp$ is not known and can't be controlled by the coarse space, it is computed at the same time as $ \mathcal Y_L(\upsilon, \bL\s \bL\s^\top , \bR\s \bA  \bR\s^\top)$. In all of our computations we found $C_\sharp \in [0.45, 0.42]$. Consequently the upper bound for $\lambda(\bH_{ad} \bA )$ is approximately $ 2 \Ncol +1$. The additive version of the two-level inexact Schwarz preconditioner is included in our study.} 

\textcolor{black}{Deflation methods for incomplete Cholesky factorizations have already been considered (without domain decomposition) by \cite{zbMATH01743066}. The authors also observed that it is only the lower part of the spectrum of $\bL\s^{-1}(\bR\s \bA \bR\s^\top) {\bL\s^\top}^{-1} $ that is problematic.}

\begin{remark}[Computation of $V\0$] 
This time, the contributions to the coarse space that come from each subdomain $s \in \llbracket 1, N \rrbracket$ require solving two generalized eigenvalue problems. For the first one, $ \bL\s \bL\s^\top \by\s = \lambda \bR\s \bA  \bR\s^\top \by\s$, it is the low frequencies and their eigenvectors that must be computed. With an iterative solver, this means solving linear systems with $ \bL\s \bL\s^\top$, a cost-effective task since $\bL\s$ is triangular. \textcolor{black}{The second eigenvalue problem resembles the one for Additive Schwarz (and Neumann-Neumann), in that, naturally, the matrix that must be (pseudo-)inverted is the singular matrix $\bM\s$ (see Remark~\ref{rem:computVO}). In the context of Inexact Schwarz, the cost of computing the coarse space should not involve any local matrix factorizations. Indeed, if these factorizations are known, it probably makes more sense to exploit them also in local solvers by reverting back to Additive Schwarz or Neumann-Neumann. As an alternative, a shifting method can be applied as follows.
\begin{itemize}
\item First, an approximation $\lambda_{\max}$ of the largest eigenvalue of $\bM \s \bx\s = \lambda\s   \bL\s \bL\s^\top \bx\s $ is computed. This involves linear solves for the factorized matrix  $\bL\s \bL\s^\top$, a cost-effective task.
\item 
Then, the contribution to the coarse space coming from the second eigenvalue problem is 
\[
\mathcal Y_L(\tau^{-1}, \bM \s ,   \bL\s \bL\s^\top) = {\bL\s^\top}^{-1} \mathcal Y_H((2 \lambda_{\max} - \tau^{-1}),\,  (2 \lambda_{\max} \matid - {\bL\s}^{-1} \bM \s  {\bL\s^\top}^{-1}) ,\, \matid),
\] 
where the matrix in the eigenvalue problem is spd. (The only exception would be the unlucky case where an eigenvalue is exactly $\tau^{-1}$ because the inequality is strict in the definition of $\mathcal Y_L$ and large in the definition of $ \mathcal Y_H$ but this is easy to fix in the code.)
\end{itemize} }
\end{remark}

\subsection{Numerical results}
\label{sub:numerical}

The results in this section were obtained with the software libraries FreeFem++ \cite{MR3043640} and GNU Octave \cite{octave}. 
Let us recall that the dimension of the problem is $n = \num{7224}$. The computational domain $\Omega$ is partitioned into $N=8$ non-overlapping subdomains. Moreover, the value of the coloring constant from Definition~\ref{def:color} is  $\Ncol = 3$ and there are $n_\Gamma = 546$ degrees of freedom that belong to more than one subdomain. \textcolor{black}{It has already been observed (see \textit{e.g.}, \cite{spillane2013abstract,SPILLANE:2013:FETI_GenEO_IJNME}) that GenEO iteration counts don't depend on the number of subdomains (scalability) and the magnitude of the jump in the coefficients (as predicted by the theory). For this reason, these tests are not performed here.}

\paragraph{Krylov Subspace Method} 
The problem presented in Subsection~\ref{sub:elast} is solved by the preconditioned conjugate gradient method (PCG) with the Additive Schwarz, Neumann-Neumann, and Inexact Schwarz preconditioners. The problem is by no means a very large problem that requires state of the art parallel solvers. The purpose is to illustrate how the GenEO coarse spaces decrease the condition number and how many vectors per subdomain need to be added to the coarse space to achieve \textit{fast} convergence. The stopping criterion for PCG is always that the error $\|\bx_i - \bx\|_\bA$ be less than $10^{-9} \|\bx\|_\bA$ starting from a zero initial guess. This is not a practical stopping criterion but it has the advantage that it does not depend on the preconditioner. The numerical bounds for the spectrum of the preconditioned operators that are reported are the approximations given by the extreme Ritz values once the algorithm has converged (see \textit{e.g.}, \cite{meurant2019approximating} for details on how to implement this procedure).

\paragraph{Results for the Additive Schwarz (AS) preconditioner from Theorem~\ref{th:Ad}}
In Figure~\ref{fig:kappatau}, the condition numbers for the Additive and Hybrid preconditioners are plotted with respect to the threshold $\tau$ for both coefficient distributions and both choices of scaling. The theoretical upper bounds are also plotted and never exceeded. The theoretical bound is less sharp when $\tau$ becomes larger. As expected, preconditioning by the Hybrid preconditioner always leads to a lower condition number than preconditioning by the fully Additive variant. 

Table~\ref{tab:layers} gives a lot more information. Only the test case `with layers' is considered. In all four configurations (hybrid/additive and $\mu$-scaling/$k$-scaling), the choice $\tau = 10$ seems to offer a good compromise between the condition number (or number of iterations) and the dimension of the coarse space. With the same value of $\tau = 10$, the dimension of the coarse space with multiplicity scaling is 241 versus only 68 with $k$-scaling. This is due to the fact that the $k$-scaling already handles coefficient jumps that are across the subdomain interfaces (the ones that are already present in the `no layers' test case). With multiplicity scaling, it is the coarse space that must handle also for these jumps. To better illustrate this behaviour, Figure~\ref{fig:gevp} shows the eigenvalues of the generalized eigenvalue problem. It is clear that the even-numbered subdomains (in which $E$ is three orders of magnitude larger) must contribute many more vectors. (Recall that it is the high-frequency vectors that get selected).  This is not a failure of the GenEO method. In practice it is highly unlikely that an automatic graph partitioner would produce such a configuration. A human partitioner could produce such a configuration. If so, she would be aware of it and should choose the scaling accordingly.

\begin{figure}
\begin{center}
\includegraphics[width=0.9\textwidth]{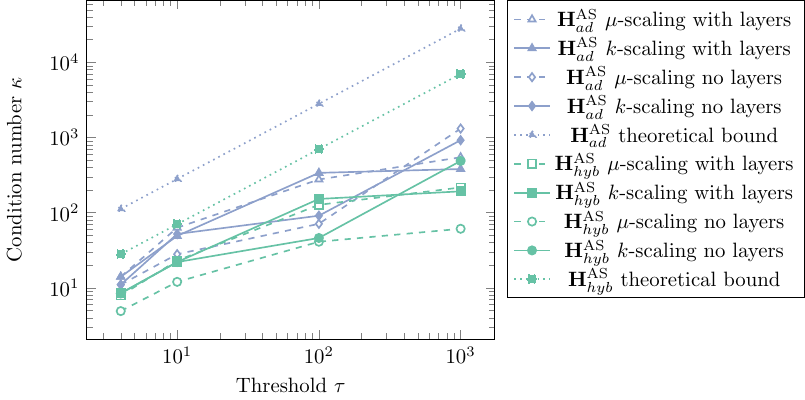}
\end{center}
\caption{Condition numbers for Additive Schwarz preconditioners: additive and hybrid; $\mu$-scaling and $k$-scaling; with and without layers; $\tau \in [4;\, 10;\, 100;\, 1000]$. All condition numbers are below the theoretical bound.}
\label{fig:kappatau}
\end{figure}

\begin{figure}
\begin{center}
\includegraphics[width=0.4\textwidth]{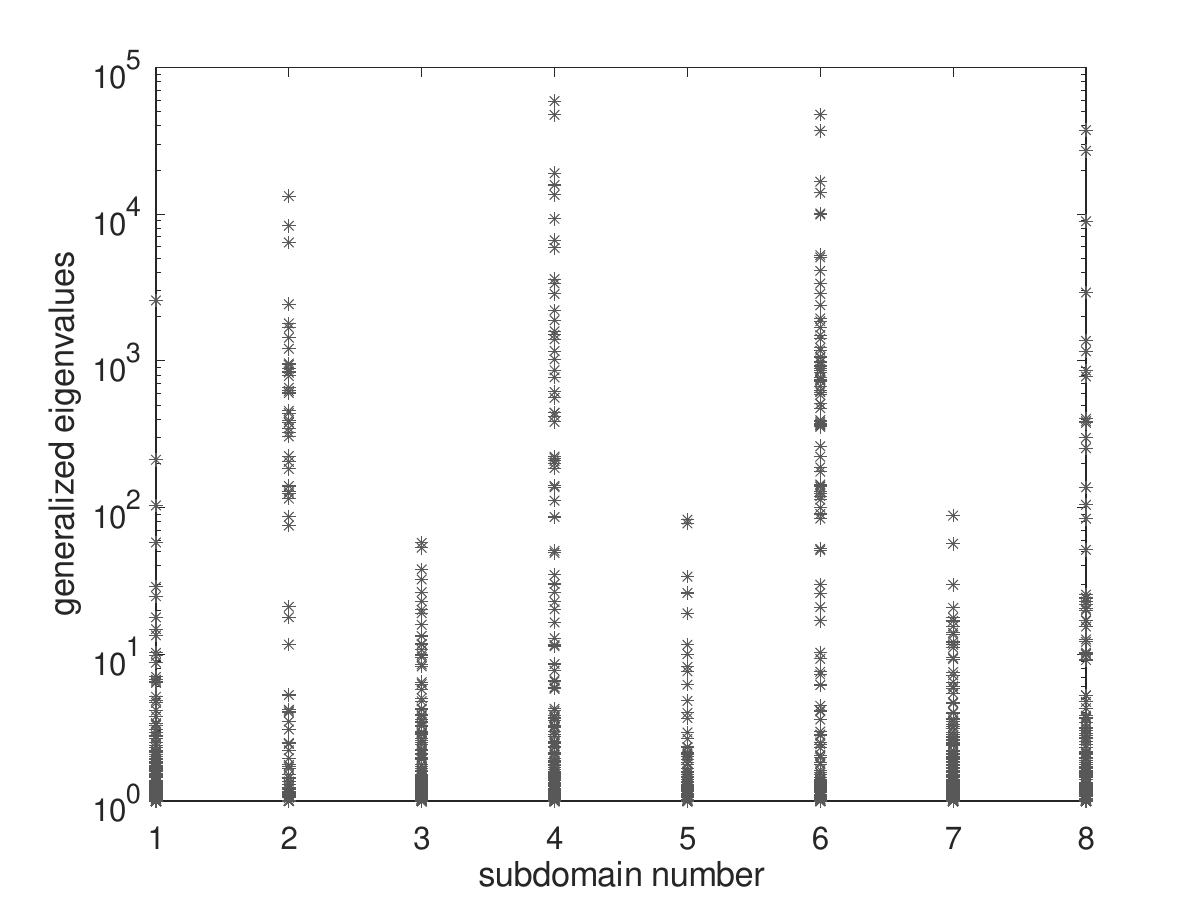} 
\includegraphics[width=0.4\textwidth]{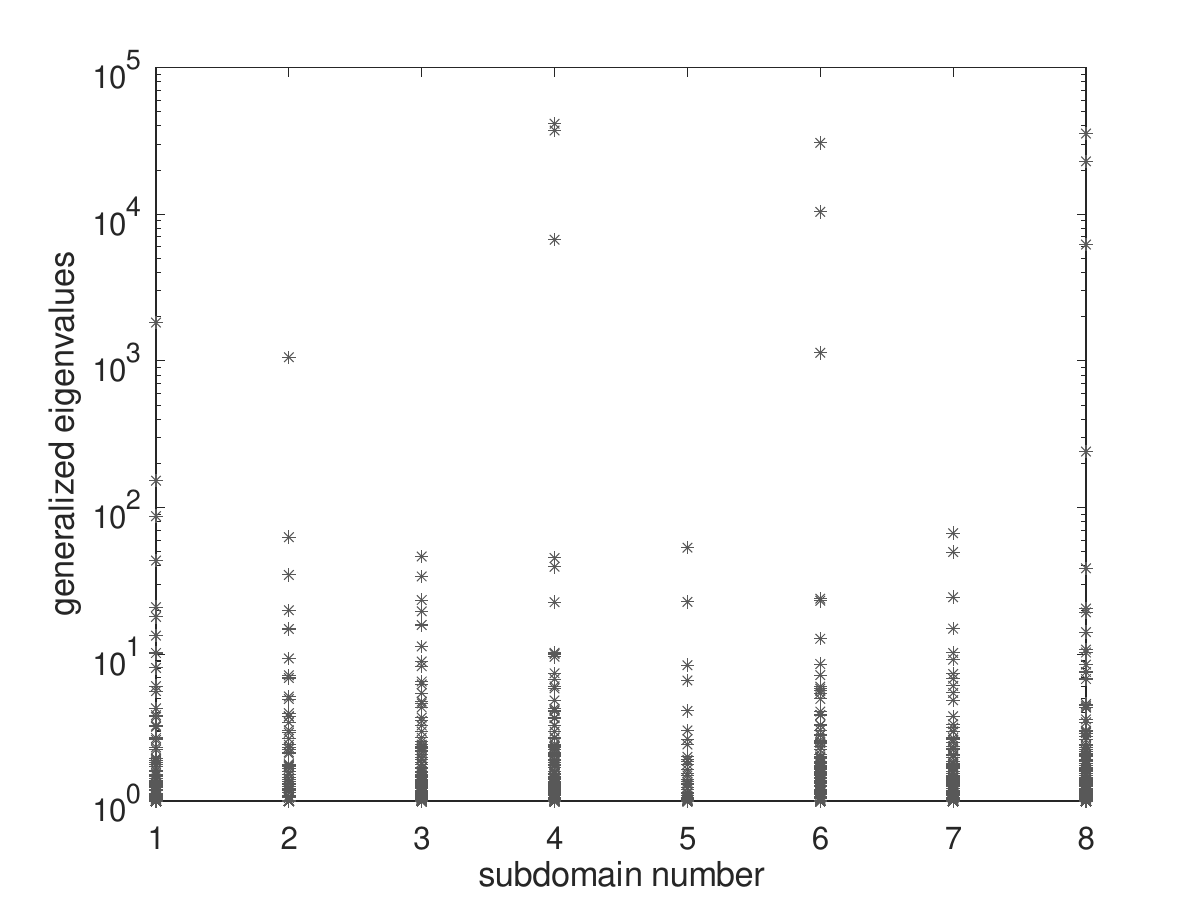} 
\end{center}
\caption{For each subdomain, solution of the generalized eigenvalue problem for computing $V\0$ in the case `with layers'. Left: $\mu$-scaling, right: $k$-scaling.}
\label{fig:gevp}
\end{figure}

\paragraph{Results also for the Neumann-Neumann (NN) and Inexact Schwarz (IS) preconditioners from Theorems~\ref{th:NN} and~\ref{th:IS}}
For lack of space these are presented in a lot less detail. Figure~\ref{fig:withwithoutkappaV0} (test cases with and without layers) shows the condition numbers with respect to the dimension of the coarse space for several values of the thresholds (that are not reported here). In this plot, the best methods are the ones which have data points closest to the origin (small condition number with a small coarse space). It appears clearly that $k$-scaling gives better results. This was previously explained. 

With $k$-scaling, the methods from most to least efficient rank as follows: Neumann-Neumann, Additive Schwarz with hybrid preconditioner, Additive Schwarz with additive preconditioner, \textcolor{black}{Inexact Schwarz with hybrid preconditioner, Inexact Schwarz with additive preconditioner.} Again, this does not tell the whole story as the cost of one iteration depends on the choice of method. With inexact Schwarz, the local solves are cheapest. With \textcolor{black}{additive variants of the preconditioners,} the coarse solve can be done in parallel to the local solves. With Neumann-Neumann, the matrices that must be handled with most care numerically (the ones that are singular) are the local solvers whereas they only appear in the generalized eigenvalue problem for the other methods.
A second word of caution about Neumann-Neumann with $k$-scaling is that the scaling matrices $\bD\s$ can be very ill-conditioned and make the whole method less efficient again. This problem is well known and independent of GenEO. All these arguments lead only to one conclusion: this data does not tell us which method is most efficient overall. The answer would in any case be \textcolor{black}{problem,} implementation and hardware dependent.

\textcolor{black}{As a final remark on the results of Figure~\ref{fig:withwithoutkappaV0}, let us comment on the Inexact Schwarz data. It can be observed that, on each Inexact Schwarz curve, there are two data points for which the dimension of the coarse space increases but the condition number does not improve (or even worsens slightly). This is because there are two parameters for the Inexact Schwarz coarse space. It is mostly only adding vectors to $Y_L(\tau^{-1},  \bM \s ,  \bL\s \bL\s^\top)$, that makes the method more efficient while adding vectors to $\mathcal Y_L(\upsilon, \bL\s \bL\s^\top , \bR\s \bA  \bR\s^\top)$ makes the coarse space grow very fast and the condition number decrease very little.}

\begin{table}
\begin{center}
{\small
\textbf{Additive Schwarz with $\mu$-scaling ($\bD\s$ from \eqref{eq:muscaling} in gevp)}

\begin{tabular}{c|c|c|c|c|c|c}
                      &     & $\lambda_{\min}$ & $\lambda_{\max}$  & $\kappa$  & It (final error)                        & $\# V\0$  ( $\min \# V\0\sups$  ; $\max \# V\0\sups$ )   \\
\hline
one-level             &     &$7.7\cdot 10^{-4}$ &  3.0            & 3875      & $>100$ ($6 \cdot 10^{-3}$) & 0         ( 0       ; 0       )  \\
\hline
$\tau = 10^{10}$& hyb & 0.003           & 3.0             & 959       & $>100$ ($1 \cdot 10^{-5}$) & 18        (  0      ; 3       )  \\
(only $\operatorname{Ker}(\bM\s)$)& ad  & 0.002           & 3.3             & 1517      & $>100$ ($3 \cdot 10^{-4}$) & 18        ( 0       ; 3 )        \\
\hline
$\tau = 1000$   & hyb & 0.014           & 3.0             & 216       & $>100$ ($1 \cdot 10^{-9}$) & 72        (  1      ; 24    )    \\
                      & ad  & 0.007           & 4.0             & 545       & $>100$ ($1 \cdot 10^{-6}$) & 72        ( 1       ; 24     )   \\
\hline
$\tau = 100$    & hyb & 0.024           & 3.0             & 127       & 92                         & 159       (  3      ; 60      )  \\
                      & ad  & 0.014           & 4.0             & 276       & $>100$ ($1 \cdot 10^{-7}$) & 159       ( 3       ; 60       ) \\
\hline
$\tau = 10$     & hyb & 0.13            & 3.0             & 23        & 42                         & 241       (  10     ; 70       ) \\
                      & ad  & 0.06            & 4.0             & 63        & 64                         & 241       ( 10      ; 70       ) \\
\hline
$\tau = 4 $     & hyb &0.37             & 3.0             & 7.9       & 23                         &  303      (  14    ; 77     )  \\
                      & ad  &0.28             & 4.0             & 14        & 31                         & 303       (  14    ; 77    )   \\
\hline
Theory ($\tau$) & hyb & $1/\tau$  &$\Ncol = 3$ & $3 \tau$ &                       &                             \\
                      & ad  & $1/(7 \tau)$           &$\Ncol+1=4$ & $28 \tau$ &                       &                      
\end{tabular}
\textbf{Additive Schwarz with $k$-scaling ($\bD\s$ from \eqref{eq:kscaling} in gevp)} 
\begin{tabular}{c|c|c|c|c|c|c}
                      &     & $\lambda_{\min}$ & $\lambda_{\max}$  & $\kappa$  & It (final error)                        & $\# V\0$ ($\min \# V\0\sups$; $\max \# V\0\sups$)    \\
\hline
one-level             &     &$7.7\cdot 10^{-4}$ &  3.0            & 3875      & $>100$ ($6 \cdot 10^{-3}$) & 0         ( 0       ; 0    )     \\
\hline
$\tau = 10^{10}$& hyb & 0.0030           & 3.0             & 1003       & $>100$ ($2 \cdot 10^{-5}$) & 18        (  0      ; 3     )    \\
(only $\operatorname{Ker}(\bM\s)$)& ad  & 0.0025          & 3.2             & 1271      & $>100$ ($2 \cdot 10^{-4}$) & 18        ( 0       ; 3 )        \\
\hline
$\tau = 1000$   & hyb & 0.016           & 3.0             & 192       & 98                          & 29        (  1      ; 6 )       \\
                      & ad  & 0.0087           & 3.3             & 380       & $>100$ ($3 \cdot 10^{-7}$) & 29        ( 1       ; 6 )       \\
\hline
$\tau = 100$    & hyb & 0.02           & 3.0             & 152       & 93                         & 31       (  1      ; 7    )    \\
                      & ad  & 0.098           & 3.3            & 338       & $>100$ ($2 \cdot 10^{-7}$) & 31       ( 1       ; 7    )    \\
\hline
$\tau = 10$     & hyb & 0.13            & 3.0             & 22        & 43                         & 68       (  5     ; 13    )    \\
                      & ad  & 0.069           & 3.37            & 49        & 63                         & 68       ( 5      ; 13   )     \\
\hline
$\tau = 4 $     & hyb &0.35             & 3.0             & 8.5       & 26                         &  118      (  8    ; 20    )   \\
                      & ad  &0.25             & 3.4             & 14        & 34                         &  118      (  8    ; 20    )   \\
\hline
Theory ($\tau$) & hyb & $1/\tau$  &$\Ncol = 3$ & $3 \tau$ &                       &                              \\
                      & ad  & $1/(7 \tau)$           &$\Ncol+1=4$ & $28 \tau$ &                       &                               
\end{tabular}
}
\end{center}
\caption{Test case `with layers' - All additive Schwarz methods - $\lambda_{\min}$ and $\lambda_{\max}$: extreme eigenvalues, $\kappa$:  condition number, It: iteration count (with relative error at iteration 100 in parenthesis if the method has not converged),  $\# V\0$ : dimension of the coarse space, $\min \# V\0\sups$: number of coarse vectors contributed by the subdomain that contributes the fewest vectors, $\max \# V\0\sups$: number of coarse vectors contributed by the subdomain that contributes the most eigenvectors, gevp: generalized eigenvalue problem. The one-level method does not satisfy any theoretical bound for $\lambda_{\min}$.}
\label{tab:layers} 
\end{table}

\begin{figure}
\begin{center}
\textbf{`No layers'}\\
\includegraphics[width=\textwidth]{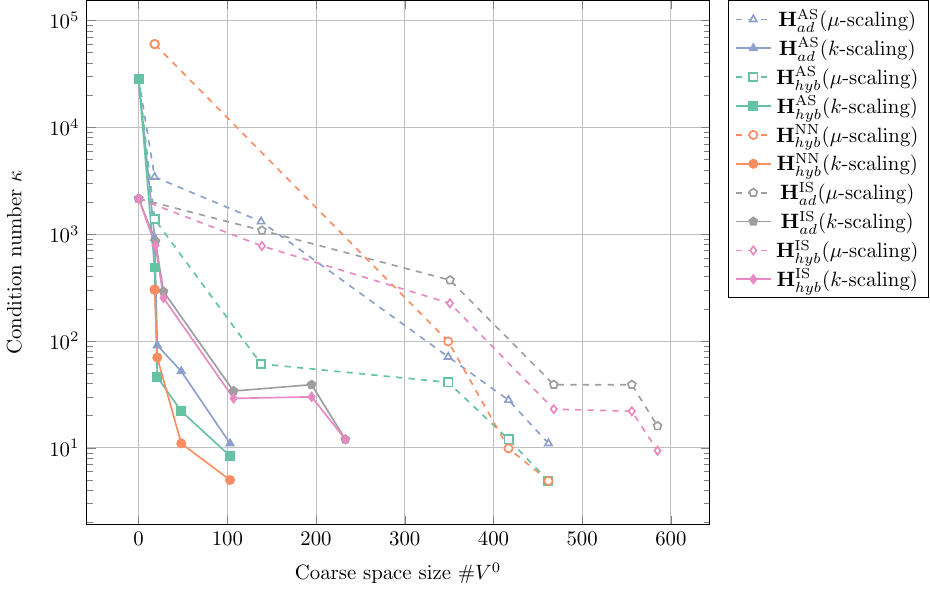}
\\
\bigskip
\textbf{`With layers'}\\
\includegraphics[width=\textwidth]{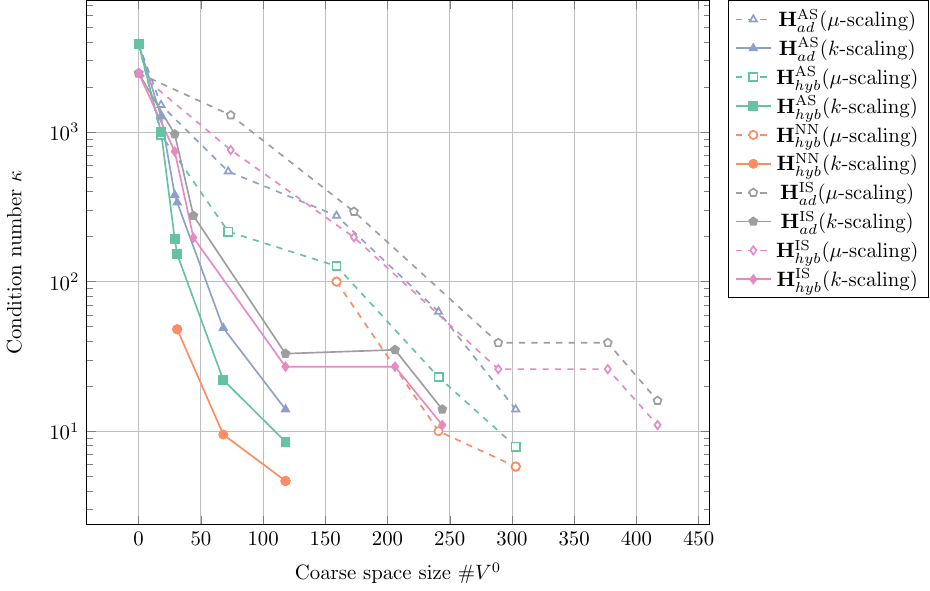}
\end{center}
\caption{Efficiency of all methods (Condition number versus coarse space dimension). Top: the test case without layers in $E$. Bottom: test case with layers in $E$.}
\label{fig:withwithoutkappaV0}
\end{figure}

\begin{table}
\textcolor{black}{
\begin{center}
\textbf{Subdomains computed by Metis (same as in previous tests)}\\
\begin{tabular}{c|c|c|c|c|c|c|c}
overlap &  $\# \Omega\s$ & scaling &  AS hyb & AS ad & NN hyb & IS hyb & IS ad \\
\hline
0 & 972  &   $\mu$-scaling    & 42 (241) & 64 (241) & 30 (241) & 50 (289) & 59 (289) \\
\hline
1 & 1083 &   $\mu$-scaling    & 37 (90) & 55 (90) & 42 (90) & 56 (132) & 67 (132) \\ 
\hline
2 & 1199 &   $\mu$-scaling    & 34 (81) & 51 (81) & 45 (81) & 58 (126) & 66 (126) \\
\hline
\hline
0 & 972  &   $k$-scaling    & 43 (68)  & 63 (68)  & 42 (90)  & 50 (118)  & 56 (118) \\
\end{tabular}
\\
\bigskip
\textbf{Regular Subdomains (does not match the coefficient distribution)}\\
Note: This is the only test with the regular partition\\
\begin{tabular}{c|c|c|c|c|c|c|c}
overlap &  $\# \Omega\s$ & scaling &  AS hyb & AS ad & NN hyb & IS hyb & IS ad \\
\hline
0 & 957  &   $\mu$-scaling    & 46 (63) & 65 (63) & 32 (63) & 51 (115) & 58 (115) \\ 
\hline
1 & 1068  &   $\mu$-scaling    & 48 (55) & 66 (55) & 50 (55) & 60 (107) & 66 (107) \\ 
\hline
2 & 1184 &   $\mu$-scaling    & 43 (51) & 60 (51) & 53 (51) & 64 (101) & 76 (101) 
\end{tabular}
\caption{Test case `with layers' in $E$. Influence of the overlap. overlap: number of layers of overlap added to each subdomain. $\# \Omega\s$: average number of degrees of freedom per subdomain. In each case, the iteration count is reported and, in parenthesis, the dimension of the coarse space (\textit{i.e.} It ($\# V\0$) with notation from the previous table). The first table is for the partition computed by Metis (same as all previous test cases). Because the Metis partition is connected to the coefficient distribution, we solve the same problem with regular subdomains. For regular subdomains and our problem, both scalings are the same.}
\label{tab:ovr}
\end{center}
}
\end{table}

\textcolor{black}{
\paragraph{Overlapping subdomains}
In the context of Additive Schwarz preconditioners, it is common practice to consider subdomains that share more overlap than just their common interfaces. This is not the case for Neumann-Neumann methods. Table~\ref{tab:ovr} studies the influence of the overlap on the efficiency of the preconditioners in terms of iteration count versus coarse space dimension. The overlap is constructed by adding to each subdomain either 1 or 2 layers of elements in every direction. With overlap, applying $k$-scaling, as defined in formula~\eqref{eq:kscaling}, no longer produces a partition of unity so only multiplicity scaling is considered.  The thresholds for the coarse spaces are set to $\tau = 10$ for Additive Schwarz, $\tau = 0.1$ for Neumann-Neumann, and $(\tau; \, \upsilon) = (10;\, 0.1)$ for Inexact Schwarz. The coefficient distribution is the case `with layers' (Figure~\ref{fig:geom} -- right). A word of caution is that the condition number bounds presented at the beginning of the section must be updated to include the correct value of $\Nprime$ if overlap is considered. 
\\
The first set of results in Table~\ref{tab:ovr} considers the same problem as previously. It is observed that adding one layer of overlap has a drastic effect on the coarse space dimension: it decreases from 289 to 132 for Inexact Schwarz and from 241 to 90 for the other methods. The effect on the iteration count is positive for Additive Schwarz and negative for the other methods. The trend is the same when passing from 1 layer of overlap to 2 but with a much less significant decrease in the coarse space dimension. It is important to realize that this test case is very particular because, without overlap, there are jumps in the coefficient across the interfaces that are not compensated by $\mu$-scaling. This is the reason why the decrease in coarse space dimension is so dramatic when passing from $0$ to $1$ layer of overlap: the interfaces no longer match the jumps. In order to confirm this, a final test is run with regular subdomains ($8$ squares of dimension $1/2 \times 1/2$ before overlap is added). Now, the partition into subdomains and the coefficient distribution no longer match. The trends are the same but much less pronounced as expected. In this particular configuration and without overlap, $k$-scaling and $\mu$-scaling are identical. To make this analysis complete, the average dimension of the coarse problems as been added to Table~\ref{tab:ovr} as a reminder that it grows with the overlap, as does the cost of communication.       
\\
It does not appear clearly to the author that adding overlap is always beneficial, particularly for methods other than Additive Schwarz. On the contrary, improving the scaling has always proved to be a good idea.
}

\section{Conclusion}

\textcolor{black}{
GenEO coarse spaces have been introduced for all domain decomposition methods in the abstract Schwarz framework under clearly stated assumptions. By solving one or two generalized eigenvalue problems in each subdomain, it is possible to construct two-level methods for which the eigenvalues of the preconditioned operator are bounded in a chosen interval. Proofs of these bounds are given for three variants of the preconditioner: projected, hybrid and, when possible, additive. As a by-product of the analysis, two core results of the abstract Schwarz framework (commonly referred to as the stable splitting property and the stability of the local solver) have been extended to singular local problems and projected local subspace. Finally, the methodology has been applied to two of the usual candidates for domain decomposition with a GenEO coarse space (Additive Schwarz and Neumann-Neumann) as well as one new one (Inexact Schwarz). Their performances are analyzed and compared on a linear elasticity problem discretized by $\mathbb P_1$ finite elements. It is advocated that particular attention should be payed to the choice of scaling in the partition of unity, and that adding overlap beyond the shared interfaces is not an obligation for performance.
}

\bibliographystyle{abbrv}
\bibliography{AbstractGenEO}

\begin{thebibliography}{10}

\bibitem{agullo2019robust}
E.~Agullo, L.~Giraud, and L.~Poirel.
\newblock Robust preconditioners via generalized eigenproblems for hybrid
  sparse linear solvers.
\newblock {\em SIAM Journal on Matrix Analysis and Applications},
  40(2):417--439, 2019.

\bibitem{zbMATH07695780}
H.~Al~Daas, P.~Jolivet, and T.~Rees.
\newblock Efficient algebraic two-level {Schwarz} preconditioner for sparse
  matrices.
\newblock {\em SIAM J. Sci. Comput.}, 45(3):a1199--a1213, 2023.

\bibitem{1999BrezinaM_HebertonC_MandelJ_VanekP-aa}
M.~Brezina, C.~Heberton, J.~Mandel, and P.~Van{\v{e}}k.
\newblock An iterative method with convergence rate chosen a priori.
\newblock Technical Report 140, University of Colorado Denver, April 1999.

\bibitem{chan1997approximate}
T.~F. Chan and H.~A. Van Der~Vorst.
\newblock Approximate and incomplete factorizations.
\newblock In {\em Parallel numerical algorithms}, pages 167--202. Springer,
  1997.

\bibitem{2003ChartierT_FalgoutR_HensonV_JonesJ_ManteuffelT_McCormickS_RugeJ_VassilevskiP-aa}
T.~Chartier, R.~D. Falgout, V.~E. Henson, J.~Jones, T.~Manteuffel,
  S.~McCormick, J.~Ruge, and P.~S. Vassilevski.
\newblock Spectral {AMG}e ({$\rho$}{AMG}e).
\newblock {\em SIAM J. Sci. Comput.}, 25(1):1--26, 2003.

\bibitem{MR3450068}
V.~Dolean, P.~Jolivet, and F.~Nataf.
\newblock {\em An introduction to domain decomposition methods}.
\newblock Society for Industrial and Applied Mathematics (SIAM), Philadelphia,
  PA, 2015.
\newblock Algorithms, theory, and parallel implementation.

\bibitem{Dolean:ATL:2011}
V.~Dolean, F.~Nataf, R.~Scheichl, and N.~Spillane.
\newblock Analysis of a two-level {S}chwarz method with coarse spaces based on
  local {D}irichlet-to-{N}eumann maps.
\newblock {\em Comput. Methods Appl. Math.}, 12(4):391--414, 2012.

\bibitem{octave}
J.~W. Eaton, D.~Bateman, S.~Hauberg, and R.~Wehbring.
\newblock {\em {GNU Octave} version 5.2.0 manual: a high-level interactive
  language for numerical computations}, 2020.

\bibitem{efendiev2012robust}
Y.~Efendiev, J.~Galvis, R.~Lazarov, and J.~Willems.
\newblock Robust domain decomposition preconditioners for abstract symmetric
  positive definite bilinear forms.
\newblock {\em ESAIM Math. Model. Numer. Anal.}, 46(5):1175--1199, 2012.

\bibitem{FarhatRoux_IJNME91}
C.~Farhat and F.-X. Roux.
\newblock A method of finite element tearing and interconnecting and its
  parallel solution algorithm.
\newblock {\em Internat. J. Numer. Methods Engrg.}, 32:1205--1227, 1991.

\bibitem{2010GalvisJ_EfendievY-a0}
J.~Galvis and Y.~Efendiev.
\newblock Domain decomposition preconditioners for multiscale flows in
  high-contrast media.
\newblock {\em Multiscale Model. Simul.}, 8(4):1461--1483, 2010.

\bibitem{2010GalvisJ_EfendievY-ab}
J.~Galvis and Y.~Efendiev.
\newblock Domain decomposition preconditioners for multiscale flows in high
  contrast media: reduced dimension coarse spaces.
\newblock {\em Multiscale Model. Simul.}, 8(5):1621--1644, 2010.

\bibitem{gander2017shem}
M.~J. Gander and A.~Loneland.
\newblock {SHEM}: An optimal coarse space for ras and its multiscale
  approximation.
\newblock In {\em Domain decomposition methods in science and engineering
  XXIII}, pages 313--321. Springer, 2017.

\bibitem{zbMATH07846109}
L.~Gouarin and N.~Spillane.
\newblock Fully algebraic domain decomposition preconditioners with adaptive
  spectral bounds.
\newblock {\em ETNA, Electron. Trans. Numer. Anal.}, 60:169--196, 2024.

\bibitem{haferssas2017additive}
R.~Haferssas, P.~Jolivet, and F.~Nataf.
\newblock An additive {S}chwarz method type theory for {L}ions's algorithm and
  a symmetrized optimized restricted additive {S}chwarz method.
\newblock {\em SIAM Journal on Scientific Computing}, 39(4):A1345--A1365, 2017.

\bibitem{MR3043640}
F.~Hecht.
\newblock New development in {F}ree{F}em++.
\newblock {\em J. Numer. Math.}, 20(3-4):251--265, 2012.

\bibitem{heinlein2019adaptive}
A.~Heinlein, A.~Klawonn, J.~Knepper, and O.~Rheinbach.
\newblock Adaptive {GDSW} coarse spaces for overlapping schwarz methods in
  three dimensions.
\newblock {\em SIAM Journal on Scientific Computing}, 41(5):A3045--A3072, 2019.

\bibitem{Hernandez:2005:SSF}
V.~Hernandez, J.~E. Roman, and V.~Vidal.
\newblock {SLEPc}: {A} scalable and flexible toolkit for the solution of
  eigenvalue problems.
\newblock {\em {ACM} Trans. Math. Software}, 31(3):351--362, 2005.

\bibitem{hornjoh:85}
R.~A. Horn and C.~R. Johnson.
\newblock {\em Matrix Analysis}.
\newblock Cambridge University Press, Cambridge, second edition, 2013.

\bibitem{METIS}
G.~Karypis and V.~Kumar.
\newblock A fast and high quality multilevel scheme for partitioning irregular
  graphs.
\newblock {\em SIAM J. Sci. Comput.}, 20(1):359--392 (electronic), 1998.

\bibitem{klawonn2015toward}
A.~Klawonn, M.~Lanser, and O.~Rheinbach.
\newblock Toward extremely scalable nonlinear domain decomposition methods for
  elliptic partial differential equations.
\newblock {\em SIAM Journal on Scientific Computing}, 37(6):C667--C696, 2015.

\bibitem{klawonn2016comparison}
A.~Klawonn, P.~Radtke, and O.~Rheinbach.
\newblock A comparison of adaptive coarse spaces for iterative substructuring
  in two dimensions.
\newblock {\em Electron. Trans. Numer. Anal}, 45:75--106, 2016.

\bibitem{klawonn2012deflation}
A.~Klawonn and O.~Rheinbach.
\newblock Deflation, projector preconditioning, and balancing in iterative
  substructuring methods: connections and new results.
\newblock {\em SIAM J. Sci. Comput.}, 34(1):A459--A484, 2012.

\bibitem{klawonn2001feti}
A.~Klawonn and O.~B. Widlund.
\newblock F{ETI} and {N}eumann-{N}eumann iterative substructuring methods:
  connections and new results.
\newblock {\em Comm. Pure Appl. Math.}, 54(1):57--90, 2001.

\bibitem{mandel2007adaptive}
J.~Mandel and B.~Soused{\'{\i}}k.
\newblock Adaptive selection of face coarse degrees of freedom in the {BDDC}
  and the {FETI}-{DP} iterative substructuring methods.
\newblock {\em Comput. Methods Appl. Mech. Engrg.}, 196(8):1389--1399, 2007.

\bibitem{marchand2020two}
P.~Marchand, X.~Claeys, P.~Jolivet, F.~Nataf, and P.-H. Tournier.
\newblock Two-level preconditioning for $h$ h-version boundary element
  approximation of hypersingular operator with {G}en{EO}.
\newblock {\em Numerische Mathematik}, 146(3):597--628, 2020.

\bibitem{meurant2019approximating}
G.~Meurant and P.~Tich{\`y}.
\newblock Approximating the extreme {R}itz values and upper bounds for the
  {A}-norm of the error in {CG}.
\newblock {\em Numerical Algorithms}, 82(3):937--968, 2019.

\bibitem{Nataf:TDD:2010}
F.~Nataf, H.~Xiang, and V.~Dolean.
\newblock A two level domain decomposition preconditioner based on local
  {D}irichlet-to-{N}eumann maps.
\newblock {\em C. R. Math. Acad. Sci. Paris}, 348(21-22):1163--1167, 2010.

\bibitem{Nataf:CSC:2011}
F.~Nataf, H.~Xiang, V.~Dolean, and N.~Spillane.
\newblock A coarse space construction based on local {D}irichlet-to-{N}eumann
  maps.
\newblock {\em SIAM J. Sci. Comput.}, 33(4):1623--1642, 2011.

\bibitem{pechstein2017unified}
C.~Pechstein and C.~R. Dohrmann.
\newblock A unified framework for adaptive {BDDC}.
\newblock {\em Electron. Trans. Numer. Anal}, 46(273-336):3, 2017.

\bibitem{pechstein2011analysis}
C.~Pechstein and R.~Scheichl.
\newblock Analysis of {FETI} methods for multiscale {PDE}s. {P}art {II}:
  interface variation.
\newblock {\em Numer. Math.}, 118(3):485--529, 2011.

\bibitem{mandel2013adaptive}
B.~Soused{\'{\i}}k, J.~{\v{S}}{\'{\i}}stek, and J.~Mandel.
\newblock Adaptive-{M}ultilevel {BDDC} and its parallel implementation.
\newblock {\em Computing}, 95(12):1087--1119, 2013.

\bibitem{2011SpillaneCR}
N.~Spillane, V.~Dolean, P.~Hauret, F.~Nataf, C.~Pechstein, and R.~Scheichl.
\newblock A robust two-level domain decomposition preconditioner for systems of
  {PDE}s.
\newblock {\em C. R. Math. Acad. Sci. Paris}, 349(23-24):1255--1259, 2011.

\bibitem{spillane2013abstract}
N.~Spillane, V.~Dolean, P.~Hauret, F.~Nataf, C.~Pechstein, and R.~Scheichl.
\newblock Abstract robust coarse spaces for systems of {PDE}s via generalized
  eigenproblems in the overlaps.
\newblock {\em Numer. Math.}, 126(4):741--770, 2014.

\bibitem{SPILLANE:2013:FETI_GenEO_IJNME}
N.~Spillane and D.~J. Rixen.
\newblock {Automatic spectral coarse spaces for robust {FETI} and {BDD}
  algorithms}.
\newblock {\em Int. J. Numer. Meth. Engng.}, 95(11):953--990, 2013.

\bibitem{tang2009comparison}
J.~M. Tang, R.~Nabben, C.~Vuik, and Y.~A. Erlangga.
\newblock Comparison of two-level preconditioners derived from deflation,
  domain decomposition and multigrid methods.
\newblock {\em J. Sci. Comput.}, 39(3):340--370, 2009.

\bibitem{ToselliWidlund_book2005}
A.~Toselli and O.~Widlund.
\newblock {\em Domain decomposition methods---algorithms and theory}, volume~34
  of {\em Springer Series in Computational Mathematics}.
\newblock Springer-Verlag, Berlin, 2005.

\bibitem{zbMATH01743066}
C.~Vuik, A.~Segal, L.~El~Yaakoubi, and E.~Dufour.
\newblock A comparison of various deflation vectors applied to elliptic
  problems with discontinuous coefficients.
\newblock {\em Appl. Numer. Math.}, 41(1):219--233, 2002.

\bibitem{zampini2016pcbddc}
S.~Zampini.
\newblock {PCBDDC}: a class of robust dual-primal methods in {PETS}c.
\newblock {\em SIAM Journal on Scientific Computing}, 38(5):S282--S306, 2016.

\end{thebibliography}
\end{document}